\numberwithin{equation}{section}
\theoremstyle{plain}
\newtheorem{Th}{Theorem}[section]
\newtheorem{Cor}[Th]{Corollary}
\newtheorem{Prop}[Th]{Proposition}
\theoremstyle{definition}
\newtheorem{Def}[Th]{Definition}
\newtheorem{?}[Th]{Problem}
\newtheorem{Ex}[Th]{Example}
\newtheorem*{Nt*}{Note}
\newcommand{\rk}{\operatorname {rk}}
\begin{document}

	\setcounter{page}{1}
	\vspace{2cm}

	\title{Minimum Generating Sets for Complete Graphs }
	\author{Selma Altınok and Gökçen Dilaver}

	\begin{abstract}
	Let $G$ be a graph with edges labeled by ideals of a commutative ring $R$ with identity. Such a graph is called an edge-labeled graph over $R$. A generalized spline is a vertex labeling so that the difference between the labels of any two adjacent vertices lies in the ideal corresponding to the edge. These generalized splines form a module  over $R$. In this paper, we consider complete graphs whose edges are labeled with proper ideals of $\mathbb{Z} / m\mathbb{Z}$. We compute  minimum generating sets of constant flow-up classes for spline modules on edge-labeled complete graphs over $\mathbb{Z} / m\mathbb{Z}$ and determine their rank  under some restrictions. \\

\end{abstract}
	
	\maketitle
	
	\section{Introduction} 
	
The term \emph{spline} was first used by engineers to model complicated objects like ships or cars. Mathematicians later adopted the term and they refer to such objects as "Classical splines". \emph{Classical splines} are piecewise polynomial functions defined on the faces of a polyhedral complex that
agree on the intersection of adjacent faces. Classical splines are important tools in topology and geometry to construct equivariant cohomology rings; applied mathematics to approximate complicated functions, and also recently in many areas like computer based animations and geometric design.

In this paper, we work on an extension of classical splines which was introduced  by Gilbert, Polster and Tymoczko \cite{2013}.
Let $R$ be a commutative ring with identity, $G=(V,E)$ be a  graph and $ \alpha : E \to \{\text{ideals in R}\}$ be a function which labels edges of $G$ by ideals of $R$. Such a graph $(G,\alpha)$ is called an edge-labeled graph. \emph{A generalized spline} is a vertex labeling on an edge-labeled graph $(G,\alpha)$ such that the difference between the labels of any two adjacent vertices lies in the corresponding edge ideal. The collection of generalized splines over the edge-labeled graph is denoted by $R_{(G, \alpha)}. $
$$R_{(G,\alpha)} = \{F \in R^{|V|} : \text{for each edge} \; e = uv, \; \text{the difference} \;	f_u - f_v \in \alpha(uv)\}.$$ 
It has a ring and an $R$- module structure. Such a module is called a generalized spline module.

Gilbert, Polster and Tymoczko \cite{2013} asked given $(G,\alpha)$ an edge-labeled graph or a particular family of graphs such as cycles, complete graph, bipartite graph or hypercubes could we find a minimal generating set for the generalized splines $R_{(G,\alpha)}$ when $R$ is an integral domain? Handschy, Melnick and Reinders \cite{2014} focus on integer
generalized splines on cycles. They presented a special type of generalized splines, called flow-up classes, and showed the existence of the smallest flow-up classes on cycles over $\mathbb{Z}$. Moreover, they proved that these flow-up classes formed a basis for generalized spline modules.  The same result holds for arbitrary graphs over integers by  Bowden, Hagen, King and Reinders \cite{2015bases}. 

Bowden and Tymoczko~\cite{2015} consider splines over $\mathbb{Z} / m\mathbb{Z}$. These $\mathbb{Z}$-modules of splines over $\mathbb{Z} / m\mathbb{Z}$ must have minimum generating sets, namely a generating set with the smallest possible size. Minimum generating sets can be smaller than expected. Over a domain, it is known that there are at least $n$ elements in the minimum generating set for graphs with n vertices. In case of splines over $\mathbb{Z} / m\mathbb{Z}$ there are at most $n$ elements (see Theorem 4.1 in~\cite{2015}). The such smallest number is called the rank of $\mathbb{Z}$-modules of splines over $\mathbb{Z} / m\mathbb{Z}$. Bowden and Tymoczko~\cite{2015}
gave an algorithm to produce minimum generating sets for splines on cycles. Later on, Philbin, Swift, Tammaro and Williams \cite{2017} extended Bowden and Tymoczko's work to arbitrary graphs and gave  an algorithm to produce minimum generating sets for connected graphs over $\mathbb{Z}/  m\mathbb{Z}$\@. 

In this paper, we determine an explicit minimal generating set for splines on complete graphs with $n$ vertices over $\mathbb{Z}/  m\mathbb{Z}$ under some restrictions on edge labelings and their rank $n$. Since  Philbin, Swift, Tammaro and Williams's algorithm is not applicable when  $m$ is sufficiently large or $G$ is a graph with a large number of vertices we use a different approach  to construct  minimum flow-up generating sets over $\mathbb{Z}/  m\mathbb{Z}$.
We use some results of  Altınok and Sarıoglan \cite{2019} to find flow-up generating sets for splines over $\mathbb{Z} / m\mathbb{Z}$\@. We show that  the obtained generating sets are in fact  minimum flow-up generating sets by using Criteria~\ref{criteria} in Section~\ref{modulo m} given by Bowden and Tymoczko \cite{2015}. As a result of this, we obtain a  minimum flow-up generating set on complete graphs whose edges are labeled by $a^i$ over $\mathbb{Z}/ m\mathbb{Z}$, in particular over $\mathbb{Z}/ p^t\mathbb{Z}$. Bowden and Tymoczko \cite{2015} also show that module of splines defined on a graph with  $n$ vertices over $\mathbb{Z}/ m\mathbb{Z}$ can have any rank between 2 and $n$. We give an alternative proof of this fact by using trails as considered in the work of Altınok and Sarıoglan \cite{2019}, and show  existence of a complete graph $K_n$ over $\mathbb{Z} / m\mathbb{Z}$ whose rank can be between 1 and $n$ where $m$ is the product of two distinct prime numbers and $n \ge 4$.

	%------------------------------------------------
	%------------------------------------------------
	\section{Background and Notations}
	
	In this section, we introduce some important definitions and notations for generalized splines. Throughout this paper, we assume that $R$ is a commutative ring with identity and $G=(V,E)$ is a finite simple connected  graph, defined by a set of vertices $V$ and a set of edges $E$\@.  

We use $K_n$ to denote a complete graph with $n$ vertices, $C_n$ to denote a cycle graph with $n$ vertices, $S_n$ to denote a star graph with $n+1$ vertices, and $P_n$ to denote a path graph with $n$ vertices. 

%------------------------------------------------
\begin{Def}
	Let $G = (V,E)$ be a graph and let $R$ be a commutative ring with identity. An edge-labeling function is a map
	$ \alpha : E \to \{\text{ideals in R}\}$ from the set of edges of $G$ to the set of nonzero ideals in $R$\@. We call the pair \begin{math}
		(G, \alpha)
	\end{math}  an \emph{edge-labeled graph}\@.
\end{Def}
%------------------------------------------------
\begin{Def}
	Let ($G$,\begin{math}
		\alpha \end{math}) be an edge-labeled graph. \emph{A generalized spline} on $(G,\alpha)$ is a vertex labeling 
	$F \in R^{\mid V \mid }$ such that for each edge 
	$e_{ij}= v_iv_j \in E$ we have $f_{v_i} - f_{v_j} \in \alpha(v_iv_j)$, where $f_{v_i}$ denotes the label on the vertex $v_i$ \@.
	The collection of generalized splines on $(G,\alpha)$ with base ring $R$ is denoted by $R_{(G, \alpha)} $ and for elements of $R_{(G, \alpha)} $ we use column matrix notation with ordering from bottom to top as follows: 
	
	\centering \begin{displaymath}
		F =
		\left( \begin{array}{c}
			f_{v_n} \\
			\vdots\\
			f_{v_1}
		\end{array} \right) \in R_{(G, \alpha)}.
	\end{displaymath} 
	\raggedright We also use the vector notation  \begin{math}
		F= (f_{v_1},\ldots,f_{v_n}).
	\end{math}
	In the following we refer to generalized splines simply as splines. 
\end{Def} 

In this paper, our base ring is $R = \mathbb{Z}/ m\mathbb{Z}$\@. Since every ideal in $\mathbb{Z}/ m\mathbb{Z}$ is principal, we represent the edge label by the positive smallest integer  $f$ in its coset $\bar{f} = f+m\mathbb{Z}$ where the ideal corresponding to the edge  is $I=\langle \bar f\rangle$.  We generally assume that the edges of our graphs are not labeled by trivial ideals unless otherwise stated.

%------------------------------------------------ 
It can be easily observed that
\begin{itemize}
	\item $R_{(G,\alpha)}$ is a ring with unit 1 defined by $1_v = 1 $ for each vertex $v \in V $\@.
	\item $R_{(G,\alpha)}$ has an $R$-module structure with componentwise addition and multiplication by elements of $R$\@.
	\item If $R$ is a principal ideal domain, then  $R_{(G,\alpha)}$ has a free $R$-module structure.
\end{itemize}
%------------------------------------------------ 
\begin{Def}
	Let  $(G,\alpha)$  be an edge-labeled graph with an ordered set of vertices $\{v_1,\ldots,v_n\}$\@. \emph{An $i$-th flow-up class} $F^{(i)}$ over \begin{math}
		(G,\alpha) \end{math}, for $1\le i \le n $, 
	is a spline for which the component $f_{v_i}^{(i)} \neq 0 $ and $f_{v_t}^{(i)} = 0 $ whenever $t < i $\@. The set of all $i$-th  flow-up classes is denoted by $\mathcal{F}_i$\@.
\end{Def}

%------------------------------------------------

\begin{Def}
	\emph{A constant flow-up class} in $R_{(G,\alpha)}$ is a flow-up spline $F^{(i)}$ for which there exists an element $a_i \in R$  such that $f_{v_j}^{(i)} \in \{0,a_i\}$ for each vertex $v_j \in V$.
\end{Def}

%------------------------------------------------

%------------------------------------------------
When $R=\mathbb Z/m\mathbb Z$, we use the notation $[\mathbb Z/m\mathbb Z]_{(G,\alpha)}$ for $R_{(G,\alpha)}$ as a $\mathbb  Z$-module.	We define a minimum generating set for a $\mathbb Z$-module $[\mathbb Z/m\mathbb Z]_{(G,\alpha)}$ as follows.
\begin{Def}
	\emph{A minimum generating set} for a $\mathbb{Z}$-module $[\mathbb Z/m\mathbb Z]_{(G,\alpha)}$ is a spanning set of splines with the smallest possible number of elements. The size of a minimum generating set is called  the \emph{rank} and is denoted by $\rk [\mathbb Z/m\mathbb Z]_{(G,\alpha)}$.
\end{Def}

%------------------------------------------------

%------------------------------------------------  
\begin{figure}[h]
	\centering	
	\begin{subfigure}[b]{0.32\textwidth}
		\includegraphics[width=1\textwidth]{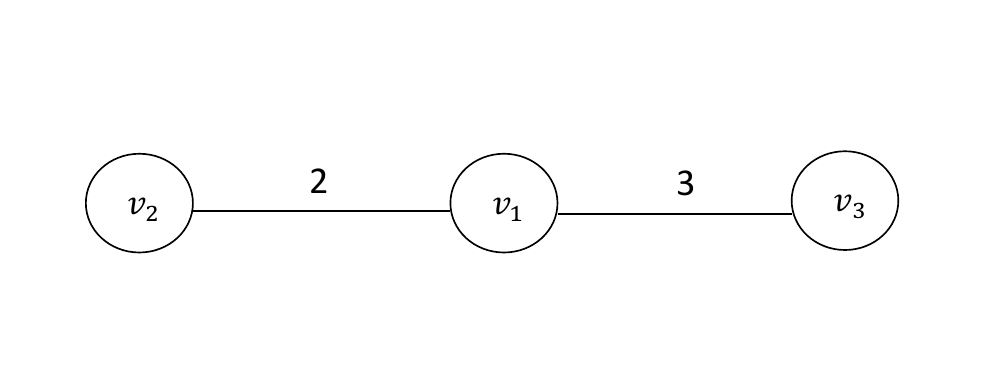}
		\caption{Path graph $P_3$}
		\label{fig:deneme}
	\end{subfigure} 
	\:\:\:\:\:
	\begin{subfigure}[b]{0.25\textwidth}
		\includegraphics[width=1\textwidth]{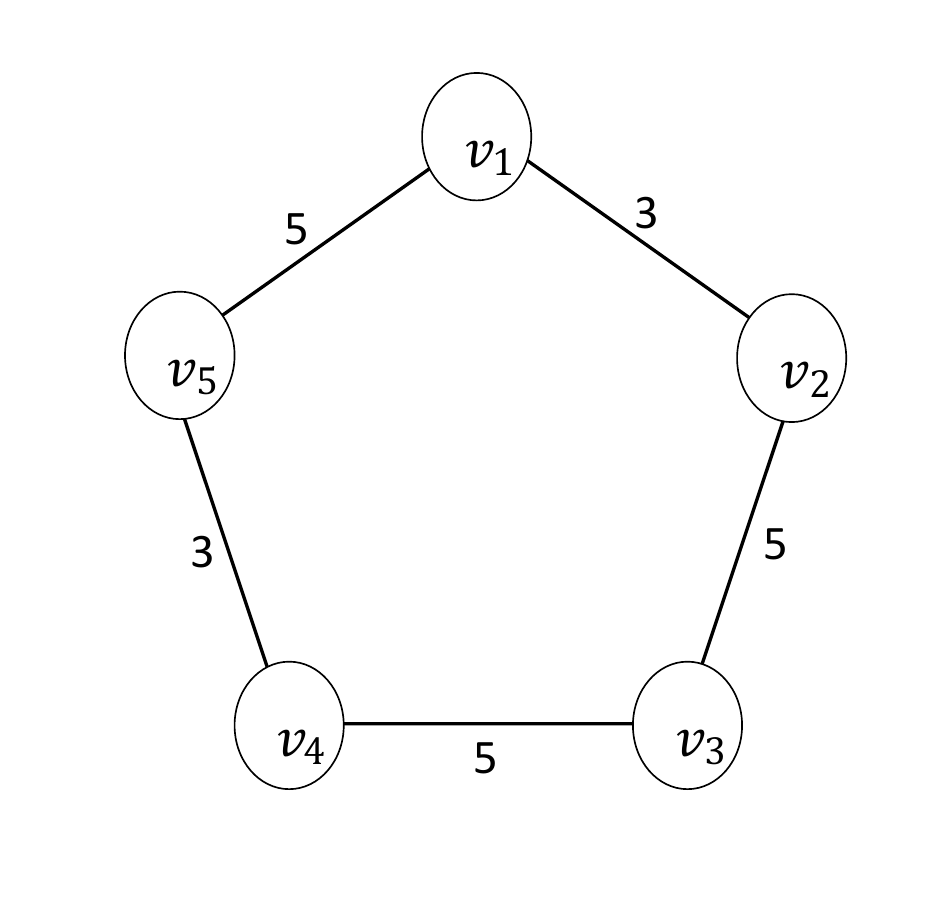}
		\caption{Cycle graph $C_5$}
		\label{fig:deneme1}
	\end{subfigure} 
	\caption{}
\end{figure}	
\begin{Ex}
	Let \begin{math}
		(P_3,\alpha) \end{math} be an edge-labeled path graph with three vertices over $\mathbb{Z}/ 6\mathbb{Z}$ as in Figure \ref{fig:deneme}. Then a flow-up generating set for $[\mathbb{Z}/ 6\mathbb{Z}]_{(P_3,\alpha)}$  can be given by $$B= \{(1,1,1),(0,2,3),(0,0,3)\}$$
	Since  $B$ is not linearly independent, for example, $3.(0,2,3) \equiv (0,0,3) $, it is not minimal and hence can not be a minimum generating set. Therefore, a minimum generating set for $[\mathbb{Z}/ 6\mathbb{Z}]_{(P_3,\alpha)}$ is $$B=\{(1,1,1),(0,2,3)\}$$
	so that the rank of $[\mathbb{Z}/ 6\mathbb{Z}]_{(P_3,\alpha)}$ is $2$\@.
\end{Ex}

\begin{Ex}
	Let \begin{math}
		(C_5,\alpha) \end{math} be an edge-labeled cycle graph with five vertices over $\mathbb{Z}/ 15\mathbb{Z}$ as in Figure \ref{fig:deneme1}. Then a flow-up generating set for $[\mathbb{Z}/ 15\mathbb{Z}]_{(C_5,\alpha)}$ is given by $$B=\{(1,1,1,1,1),(0,3,3,3,0),(0,0,5,5,5),(0,0,0,5,5)\}.$$
	The set $B$ is minimal, but it is not a minimum generating set. A minimum generating set for $[\mathbb{Z}/ 15\mathbb{Z}]_{(C_5,\alpha)}$ is given as $$B=\{(1,10,0,0,6),(0,6,1,6,0),(0,0,0,10,10)\}$$ by Theorem 3.15 in \cite{2017}.
\end{Ex}

%-----------------------------------------------

\begin{Def}\label{homo}
	Let $\rho: R^{'} \to R$ be a ring homomorphism and $(G,\alpha)$ be an edge-labeled graph over $R.$ Suppose that  $ \rho ^{-1}(\alpha(e))$ is an ideal of $R^{'}$. The map $$\rho_* : R^{'}_{(G, \rho^{-1}(\alpha))} \to R_{(G,\alpha)}$$ is the restriction of the product map $\rho_* : (R^{'})^{\mid V \mid} \to R^{\mid V \mid}$ to the rings of splines namely $$(\rho_*f)_v = \rho (f_v)$$ for each spline $f \in R^{'}_{(G,\rho^{-1}(\alpha))}$ and each vertex $v \in V.$ The following can be easily observed.
	
	\begin{itemize}
		\item The map $\rho_* : R^{'}_{(G, \rho^{-1}(\alpha))} \to R_{(G,\alpha)}$ is a well-defined ring homomorphism.
		\item If  $\rho: R^{'} \to R$ is an injection then the map $\rho_* : R^{'}_{(G, \rho^{-1}(\alpha))} \to R_{(G,\alpha)}$ is an injection.
		\item If $\rho: R^{'} \to R$ is a surjection then the map $\rho_* : R^{'}_{(G, \rho^{-1}(\alpha))} \to R_{(G,\alpha)}$ is a surjection.
	\end{itemize}
	
\end{Def}

\begin{Ex}
	
	The natural surjection map $ \rho : \mathbb{Z} \to \mathbb{Z}/m\mathbb{Z}$ induces a surjection $$ \rho_* : [\mathbb{Z}]_{(G,\rho^{-1}(\alpha))} \to [\mathbb{Z}/m\mathbb{Z}]_{(G,\alpha)}$$ given by $(\rho_*f)_{v_i}= \rho(f_{v_i}) $ for each spline $f \in [\mathbb{Z}]_{(G,\rho^{-1}(\alpha))}$ and each $v_i \in V$. In particular, if $\{a_1,\ldots,a_n\}$ is a basis for the $\mathbb{Z}$-module of splines $[\mathbb{Z}]_{(G,\rho^{-1}(\alpha))}$  then $\{\rho_*(a_1),\ldots,\rho_*(a_n)\}$ spans the $\mathbb{Z}$-module of splines $[\mathbb{Z}/m\mathbb{Z}]_{(G,\alpha)}$.
	As a result of this, we identify each  spline $\bar F=(\bar f_{v_1},\ldots,\bar f_{v_n}) \in [\mathbb{Z}/m\mathbb{Z}]_{(G,\alpha)}$ 
	with any integer spline $F=(f_{v_1},\ldots,f_{v_n})$, 
	where  $f_{v_i}$ is the smallest nonnegative integer coset representative of  $\bar f_{v_i}$ 
	for $i=1,\ldots,n$.
\end{Ex}	
	
	%------------------------------------------------
	\subsection{Edge-labels on Complete Graphs} \label{completegraph}
We consider a complete graph $K_n$ together with a star graph $S_n$\@. We recursively obtain a complete graph $K_{n+1}$  as $K_{n+1} = K_n + S_n$  as follows. We start with $n=3$. We label the vertices of $K_3$ as $v_1, v_2,v_3$ in clockwise order and label the vertices of $S_3$ starting from $v_1$ to $v_3$ in clockwise order with the central vertex $v_4$. We add a star graph $S_3$ to $K_3$ in order to obtain a complete graph $K_4$ by joining the corresponding vertices of $K_3$ and $S_3$, simply  $K_4 = K_3 + S_3$ as in Figure \ref{fig: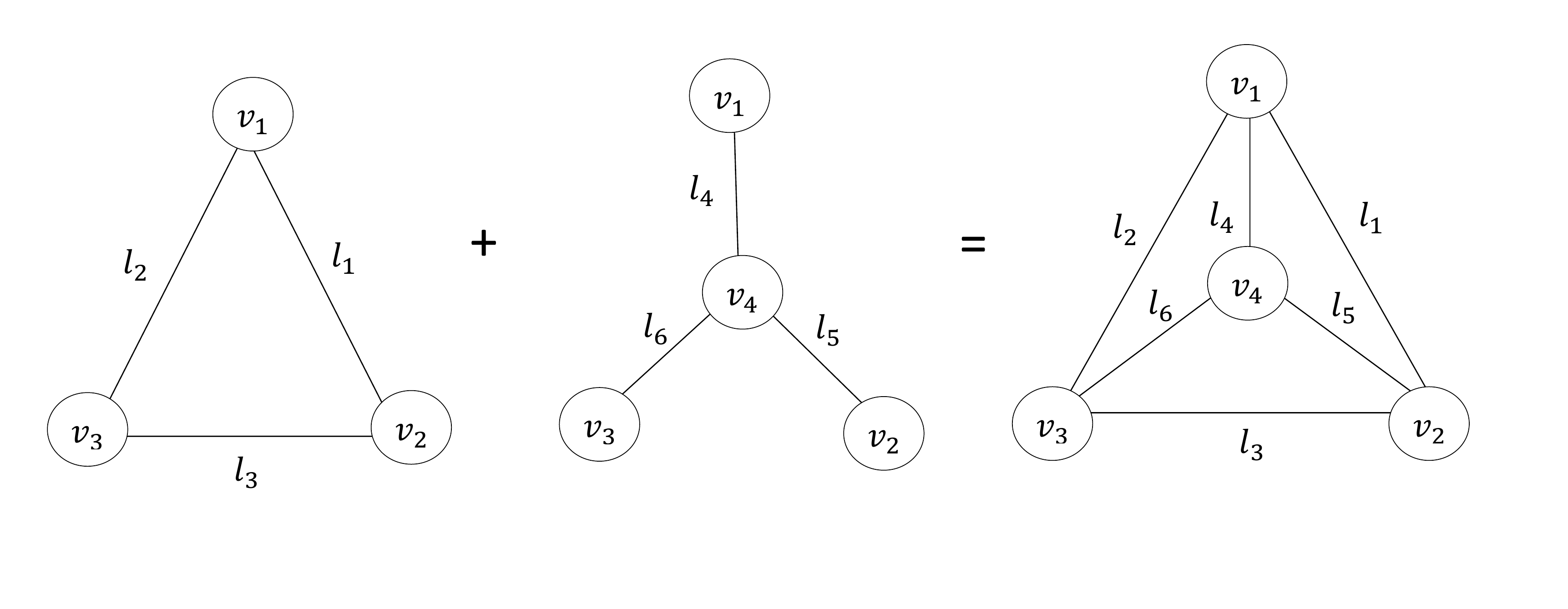}. Then we inductively define a complete graph $K_{n+1}$  as $K_{n+1} = K_n + S_n$ for a complete graph $K_n$ with vertices $v_1,\ldots,v_n$  and a start graph $S_n$ starting from $v_1$ to $v_{n}$ in clockwise order and the central vertex $v_{n+1}$\@.

\begin{figure}[h!]
	\centering
	\includegraphics[width=0.65\linewidth]{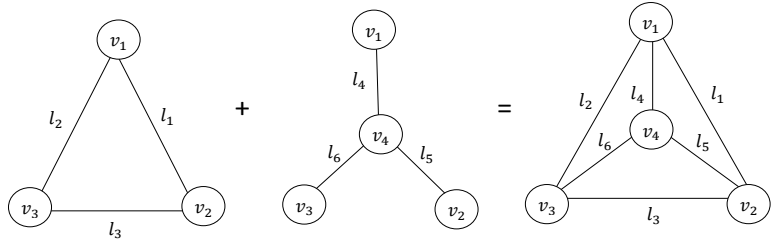}
	\caption{Construction of the edge-labeled complete graph $ K_3 + S_3 = K_4 $}
	\label{fig:k4.pdf}
\end{figure}

We fix $r_n= \frac{n(n-1)}{2}$ for any $n$. When $n \ge 1$ the number $r_n$ corresponds to the number of the edges of $K_n$\@. 		In this paper, we consider $K_n$ for $n \ge 3$, unless otherwise stated.
We want to label the edges of complete graphs as follows. We denote the edges $e_{ij}=v_iv_j$ by $e_k$ as follows:  We set the edges of $K_3$ in order by $e_1:=e_{12}, e_2:=e_{13}, e_3:=e_{23}$ and label them by generators $$l_1 \in \alpha(e_1), l_2 \in \alpha(e_2), l_3 \in \alpha(e_3).$$ We  denote the edges of $S_3$ by $e_4:=e_{14}, e_5:=e_{24}, e_6:=e_{34}$ and label them by $$l_4 \in \alpha(e_4), l_5 \in \alpha(e_5), l_6 \in \alpha(e_6).$$
Then we add $S_3$ to $K_3$ to obtain $K_4$ with the edges labeled by 
$$l_1 \in \alpha(e_1), l_2 \in \alpha(e_2), l_3 \in \alpha(e_3),
l_4 \in \alpha(e_4), l_5 \in \alpha(e_5), l_6 \in \alpha(e_6)$$ as in Figure \ref{fig:k4.pdf}.
For the construction of an edge-labeled graph $K_5$, we start by denoting the edges of $S_4$ as $e_7:=e_{15}, e_8:=e_{25}, e_9:=e_{35}, e_{10}:=e_{45}$ and label them by $$l_7 \in \alpha(e_7), l_8 \in \alpha(e_8), l_9 \in \alpha(e_9), l_{10} \in \alpha(e_{10}).$$ Then we add $S_4$ to $K_4$ to obtain $K_5$ with the edges labeled by 
$$l_1 \in \alpha(e_1), l_2 \in \alpha(e_2), l_3 \in \alpha(e_3), l_4 \in \alpha(e_4), l_5 \in \alpha(e_5), l_6 \in \alpha(e_6),$$
$$l_7 \in \alpha(e_7), l_8 \in \alpha(e_8), l_9 \in \alpha(e_9), l_{10} \in \alpha(e_{10}).$$
If we continue this way, we can recursively construct $K_n = K_{n-1} + S_{n-1}$ with the edges labeled by
$$l_1 \in \alpha(e_1), l_2 \in \alpha(e_2), l_3 \in \alpha(e_3), \ldots, l_{r_{n-1}-1} \in \alpha(e_{r_{n-1}-1}), l_{r_{n-1}} \in \alpha(e_{r_{n-1}}),$$
$$  l_{r_{n-1}+1} \in \alpha(e_{r_{n-1}+1}), l_{r_{n-1}+2} \in \alpha(e_{r_{n-1}+2}), \ldots,  l_{r_n} \in \alpha(e_{r_n}).$$

	%------------------------------------------------

	%------------------------------------------------
	%------------------------------------------------

\section{Minimum generating sets modulo $m$} \label{modulo m}
In this section, we determine  minimum flow-up generating sets for complete graphs over $\mathbb{Z}/ m\mathbb{Z}$ under some specific conditions. In order to do this, we first use some results of Altınok and Sarıoglan \cite{2019} to find flow-up generating sets for splines over $\mathbb{Z}/ m\mathbb{Z}$\@. Then we use a criterion given by Bowden and Tymoczko \cite{2015} to show that the generating sets obtained are in fact minimum flow-up generating sets. 

Throughout this section, we suppose that $p$ and $p_i$, where $i=1,\ldots, k$, are prime numbers and $m = p_1^{m_1}p_2^{m_2}\cdots p_k^{m_k}$ is a primary decomposition. Let  $a_i= p_1^{n_{i1}}p_2^{n_{i2}}\ldots p_k^{n_{ik}}$ be a zero divisor in $\mathbb{Z}/ m\mathbb{Z}$, where $0 \le n_{ij} \le m_j$, $i=1,2,\ldots, r_n$ and $j=1,2,\ldots,k$. We assume that all the edges of $K_n$ are ordered as in Section \ref{completegraph} and are labeled by 
$$l_1=a_1=p_1^{n_{11}}p_2^{n_{12}}\ldots p_k^{n_{1k}}, $$ 
$$ l_2=a_2=p_1^{n_{21}}p_2^{n_{22}}\ldots p_k^{n_{2k}},  $$ 
$$\vdots$$
$$l_{r_n}= a_{r_n}=p_1^{n_{r_n1}}p_2^{n_{r_n2}}\ldots p_k^{n_{r_nk}}. $$

%------------------------------------------------

Bowden, Hagen, King and Reinders \cite{2015bases} gave a characterization of flow-up bases for splines over the integers. Here we modified their theorem to get a set of flow-up generators over $\mathbb{Z}/ m\mathbb{Z}$.

\begin{Th}\label{basis}
	Let $(G,\alpha)$ be an edge-labeled graph over  $\mathbb{Z}/ m\mathbb{Z}$. The following are equivalent:
	\begin{itemize}
		\item The set $\{\bar{F}^{(1)},\ldots,\bar{F}^{(n)}\}$ forms a flow-up generator for $[\mathbb{Z}/ m\mathbb{Z}]_{(G,\alpha)}$\@.
		\item For each flow-up class $\bar{A}^{(i)} =(\bar{0},\ldots,\bar{0},\bar{g}_{v_i},\ldots,\bar{g}_{v_n})$ the entry $\bar{g}_{v_i}$ is a multiple of the entry $\bar{f}_{v_i}^{(i)}$\@.
	\end{itemize}

\end{Th}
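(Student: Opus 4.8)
The plan is to establish the equivalence by moving through $[\mathbb{Z}]$ using the surjection $\rho_*$, reducing the problem to the known characterization of flow-up bases over $\mathbb{Z}$ proved by Bowden, Hagen, King and Reinders \cite{2015bases}. The natural surjection $\rho: \mathbb{Z} \to \mathbb{Z}/m\mathbb{Z}$ induces a surjection $\rho_*: [\mathbb{Z}]_{(G,\rho^{-1}(\alpha))} \to [\mathbb{Z}/m\mathbb{Z}]_{(G,\alpha)}$, and crucially $\rho_*$ sends the $i$-th flow-up module $\mathcal{F}_i^{\mathbb{Z}}$ onto $\mathcal{F}_i^{\mathbb{Z}/m\mathbb{Z}}$, because for a $\mathbb{Z}/m\mathbb{Z}$-flow-up class the lower coordinates are genuinely $\bar{0}$ and can be lifted to $0 \in \mathbb{Z}$. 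So the statement about $\mathbb{Z}/m\mathbb{Z}$-splines should follow from the corresponding statement about $\mathbb{Z}$-splines together with the observation that divisibility of the leading entry $\bar{f}_{v_i}^{(i)}$ in $\mathbb{Z}/m\mathbb{Z}$ is controlled by divisibility in $\mathbb{Z}$ modulo $m$.

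First I would prove the forward direction: if $\{\bar{F}^{(1)},\ldots,\bar{F}^{(n)}\}$ is a flow-up generating set, then given any flow-up class $\bar{A}^{(i)}$, write it as a $\mathbb{Z}$-linear combination $\bar{A}^{(i)} = \sum_{j} c_j \bar{F}^{(j)}$. Looking at coordinates $v_1,\ldots,v_{i-1}$, which all vanish in $\bar{A}^{(i)}$, and using that the $\bar{F}^{(j)}$ are themselves flow-up classes ordered by index, one argues by an induction on $j$ from $1$ to $i-1$ that (after adjusting coefficients) only the terms with $j \ge i$ contribute to the coordinates $v_i,\ldots,v_n$ in a way that makes $\bar{g}_{v_i}$ a multiple of $\bar{f}_{v_i}^{(i)}$. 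The key local fact is that in $\mathbb{Z}/m\mathbb{Z}$, if $\bar{g}_{v_i} = c\,\bar{f}_{v_i}^{(i)} + (\text{contributions from lower flow-ups that are forced to be }0)$, then $\bar{g}_{v_i} \in \langle \bar{f}_{v_i}^{(i)}\rangle$. I would need to be careful that the lower flow-up generators $\bar{F}^{(1)},\ldots,\bar{F}^{(i-1)}$, whose $v_i$-entries are in general nonzero, are precisely cancelled by the vanishing of the $v_1,\ldots,v_{i-1}$ entries of $\bar{A}^{(i)}$ — this is where the ordering of the flow-up classes does the work, essentially a triangularity argument.

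For the converse direction, I would show that if the leading-entry divisibility condition holds for every flow-up class $\bar{A}^{(i)}$, then $\{\bar{F}^{(1)},\ldots,\bar{F}^{(n)}\}$ spans. Given an arbitrary spline $\bar{G} \in [\mathbb{Z}/m\mathbb{Z}]_{(G,\alpha)}$, I would subtract off an appropriate multiple of $\bar{F}^{(1)}$ to kill the $v_1$-coordinate — possible because $\bar{F}^{(1)}$ is a flow-up class and, applying the hypothesis to $\bar{A}^{(1)} = \bar{G}$ itself (which is trivially a flow-up class of index $1$ after noting every spline is), the entry $\bar{g}_{v_1}$ is a multiple of $\bar{f}_{v_1}^{(1)}$. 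The result $\bar{G} - c_1\bar{F}^{(1)}$ is a flow-up class of index $\ge 2$; iterating, applying the hypothesis at each stage to the successive remainders, one reduces $\bar{G}$ to zero after $n$ steps, exhibiting it as a combination of the $\bar{F}^{(j)}$. This is a standard Gaussian-elimination-style descent; the content is simply that the divisibility hypothesis is exactly what is needed at each step to perform the subtraction within $\mathbb{Z}/m\mathbb{Z}$.

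The main obstacle I anticipate is bookkeeping in the forward direction: ensuring that when we express $\bar{A}^{(i)}$ in terms of the generators, the coefficients of $\bar{F}^{(1)},\ldots,\bar{F}^{(i-1)}$ can be taken to be zero (or to contribute nothing to the $v_i$ entry). Over a domain this is immediate from linear independence, but over $\mathbb{Z}/m\mathbb{Z}$ there is genuine torsion, so I would instead argue directly with the triangular structure: process coordinates $v_1, v_2, \ldots, v_{i-1}$ in order, at each step using that $\bar{F}^{(t)}$ has its first nonzero entry at position $v_t$ to solve for the coefficient attached to $\bar{F}^{(t)}$, and track that after all these reductions the $v_i$-entry of the remaining combination is an integer multiple of $\bar{f}_{v_i}^{(i)}$ plus the original $\bar{g}_{v_i}$. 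Making this rigorous may require a small auxiliary lemma that a flow-up class supported from index $t$ onward, when combined with generators, only has its index-$t$ leading entry constrained — essentially re-deriving a weak form of the $\mathbb{Z}$-result of \cite{2015bases} in the modular setting, or invoking it via $\rho_*$ as sketched above.
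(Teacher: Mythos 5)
Your converse direction coincides with the paper's proof: a finite induction that peels off one leading coordinate at a time, using the divisibility hypothesis at step $j$ to write $\bar{h}_{v_j}'=\bar{a}_j\bar{f}_{v_j}^{(j)}$ and subtract $\bar{a}_j\bar{F}^{(j)}$ (with the harmless convention that a vanishing entry is a multiple of anything, so the hypothesis is not actually needed when the remainder fails to be a genuine $j$-th flow-up class). That half is complete and is essentially word for word what the paper does.

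The forward direction is where your proposal stalls, and the difficulty you flag is genuine. You want to write $\bar{A}^{(i)}=\sum_j c_j\bar{F}^{(j)}$ and show the terms with $j<i$ contribute nothing to the $v_i$-entry; but over $\mathbb{Z}/m\mathbb{Z}$ the vanishing of the first $i-1$ coordinates only gives relations such as $c_1\bar{f}_{v_1}^{(1)}=\bar{0}$, $c_1\bar{f}_{v_2}^{(1)}+c_2\bar{f}_{v_2}^{(2)}=\bar{0}$, and a torsion coefficient $c_j$ can annihilate the leading entry of $\bar{F}^{(j)}$ while leaving a nonzero contribution at position $v_i$. (Only $c_1$ is genuinely forced to vanish, since $(1,\ldots,1)$ must be generated and hence $\bar{f}_{v_1}^{(1)}$ is a unit.) Your triangular bookkeeping then runs into exactly the circularity you sense: the residual $\sum_{j<i}c_j\bar{F}^{(j)}$ is itself a spline with $i-1$ leading zeros whose $v_i$-entry you would need to know lies in $\langle\bar{f}_{v_i}^{(i)}\rangle$ --- which is the statement being proved. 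The $\rho_*$ detour does not close this either: although a mod-$m$ flow-up class does lift to an integer flow-up class with genuine zeros, the lifts of the $\bar{F}^{(j)}$ need not generate the integer spline module (their leading entries generate $\langle f\rangle$ rather than $\langle\gcd(f,m)\rangle$), so the characterization of \cite{2015bases} cannot be applied to them. So as written your forward implication is incomplete; for comparison, you should know that the paper's own proof simply asserts in one sentence that $\bar{A}^{(i)}=\bar{a}_i\bar{F}^{(i)}+\cdots+\bar{a}_n\bar{F}^{(n)}$ because $\bar{A}^{(i)}$ has $i-1$ leading zeros --- i.e., it passes over precisely the step you could not justify, without argument. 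Identifying and honestly isolating that step is to your credit, but a correct proof still requires an additional idea (or an added hypothesis) that neither you nor the paper supplies.
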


\begin{proof}	
	Suppose that the set $\{\bar{F}^{(1)},\ldots,\bar{F}^{(n)}\}$ forms a flow-up generator for $[\mathbb{Z}/ m\mathbb{Z}]_{(G,\alpha)}$. Let  $\bar{A}^{(i)} =(\bar{0},\ldots,\bar{0},\bar{g}_{v_i},\ldots,\bar{g}_{v_n})$ be a spline in $[\mathbb{Z}/ m\mathbb{Z}]_{(G,\alpha)}$ with $i-1$ leading zeros. We can write $\bar{A}^{(i)}$ as a linear combination of the splines $\bar{F}^{(1)},\ldots,\bar{F}^{(n)}$ because they form a  flow-up generator. Since $\bar{A}^{(i)}$ has $i-1$ leading zeros, $\bar{A}^{(i)} = \bar{a}_i\bar{F}^{(i)} + \cdots + \bar{a}_n\bar{F}^{(n)} $ for some $\bar{a}_i,\ldots,\bar{a}_n \in \mathbb{Z}/m\mathbb{Z}$\@. It follows that the $i$-th entry of $\bar{A}^{(i)}$ is $\bar{g}_{v_i} = \bar{a}_i\bar{f}_{v_i}^{(i)} +\bar{a}_{i+1}\bar{0}+ \cdots+ \bar{a}_n\bar{0} $, that is $\bar{g}_{v_i} = \bar{a}_i\bar{f}_{v_i}^{(i)} $, for some $\bar{a}_i \in \mathbb{Z}/m \mathbb{Z}$\@.

	\hfil\break
	We now prove the converse. Assuming that for each flow-up class 
	$\bar{A}^{(i)} =(\bar{0},\ldots,\bar{0},\bar{g}_{v_i},\ldots,\bar{g}_{v_n})$ the entry $\bar{g}_{v_i}$ is a multiple of the entry $\bar{f}_{v_i}^{(i)}$ of any corresponding flow-up class $\bar{F}^{(i)}$, we show that  the  set $\{\bar{F}^{(1)},\ldots,\bar{F}^{(n)}\}$ forms a flow-up generator for $[\mathbb{Z}/ m\mathbb{Z}]_{(G,\alpha)}$. 
	Let $\bar{A} = (\bar{h}_{v_1},\ldots,\bar{h}_{v_n})$ be an arbitrary spline in  $[\mathbb{Z}/ m\mathbb{Z}]_{(G,\alpha)}$\@. We will show by a finite induction that for each $j\in \{1,2,\ldots,n\}$ we can write		 
	$$\bar{A} = \bar{A}_j^{'} + \sum_{k=1}^{j} \bar{a}_k\bar{F}^{(k)}, $$
	where $\bar{A}_j^{'}$ is a spline with (at least) $j$ leading zeros. The base case is when $j=1$. By assumption we have $\bar{h}_{v_1} = \bar{a}_1\bar{f}_{v_1}^{(1)}$. Therefore  $\bar{A}$ as
	$$ \bar A= (\bar{0},\bar{h}_{v_2} -\bar{a}_1\bar{f}_{v_2}^{(1)},\ldots,\bar{h}_{v_n} -\bar{a}_1\bar{f}_{v_n}^{(1)}) 	+ \bar{a}_1\bar{F}^{(1)},$$
	where ~$\bar{A}_1^{'} = (\bar{0},\bar{h}_{v_2} - \bar{a}_1\bar{f}_{v_2}^{(1)},\ldots,\bar{h}_{v_n} - \bar{a}_1\bar{f}_{v_n}^{(1)})$ is a spline.
	Now  assume, as the induction hypothesis, that  the equality $\bar{A} = \bar{A}_{j-1}^{'} + \sum\limits_{k=1}^{j-1} \bar{a}_k\bar{F}^{(k)} $ holds for some $j\in \{2,3\ldots,n\}$, where $\bar{A}_{j-1}^{'}$ is a spline with (at least) $j-1$ leading zeros.	We show that the same equality holds with $j-1$ replaced by $j$. Thus suppose that $$\bar{A}= (\bar{0},\ldots,\bar{0},\bar{h}_{v_j}^{'},\ldots,\bar{h}_{v_n}^{'}) + \sum\limits_{k=1}^{j-1} \bar{a}_k\bar{F}^{(k)}.$$	
	Since  $\bar{A}_{j-1}^{'}$ is a flow-up class, by assumption, we have $\bar{h}_{v_j}^{'} = \bar{a}_j\bar{f}_{v_j}^{(j)}$ for some $\bar{a}_j \in \mathbb{Z}/m\mathbb{Z}$. It follows that $\bar{A}$ can be written by
	$$ A =
	(\bar{0},\ldots,\bar{0},\bar{h}_{v_{j+1}}^{'} - \bar{a}_j\bar{f}_{v_{j+1}}^{(j)},\ldots,\bar{h}_{v_n}^{'} - \bar{a}_j\bar{f}_{v_n}^{(j)}) + \sum\limits_{k=1}^{j} \bar{a}_k\bar{F}^{(k)}.$$ 
	By letting $\bar{A}_j^{'}=(\bar{0},\ldots,\bar{0},\bar{h}_{v_{j+1}}^{'} - \bar{a}_j\bar{f}_{v_{j+1}}^{(j)},\ldots,\bar{h}_{v_n}^{'} - \bar{a}_j\bar{f}_{v_n}^{(j)})$, we obtain  $\bar{A} = \bar{A}_j^{'} + \sum\limits_{k=1}^{j} \bar{a}_k\bar{F}^{(k)} $ where $\bar{A}_j^{'}$ is a spline with $j$ leading zeros.
	In particular, we have  $\bar{A} = \bar{A}_n^{'} + \sum\limits_{k=1}^{n} \bar{a}_k\bar{F}^{(k)} $\@. Since $\bar{A}_n^{'}=(\bar{0},\ldots,\bar{0})$ is a spline with $n$ leading zeros, we obtain  $\bar{A} = \sum\limits_{k=1}^{n} \bar{a}_k\bar{F}^{(k)} $\@.
	
\end{proof}

%------------------------------------------------	

%------------------------------------------------

The following criterion by Bowden and Tymoczko \cite{2015} gives conditions for when the set of flow-up generators is in fact a minimum flow-up generating set over $\mathbb{Z}/ m\mathbb{Z}$.  

\begin{Cor}{\upshape\cite{2015}}\label{criteria}
	Suppose that $\{b_1,b_2,\ldots,b_k\}$ is a set of flow-up generators for $[\mathbb{Z}/ m\mathbb{Z}]_{(G,\alpha)}$ satisfying the following properties:
	\begin{itemize}
		\item The spline $b_1 = (1,\ldots, 1)$.
		\item The splines $\{b_i : i = 2,3,\ldots,k\}$ are constant flow-up classes satisfying ~$b_{i_v} \in \{0,c_i\}$ for each $v \in V$ and each $i$.
		\item The set $\{c_1=1,c_2,\ldots,c_k\}$ can be reordered so that $c_{i_1} \mid c_{i_2} \mid \cdots \mid c_{i_k}$.
		
	\end{itemize}
	
	Then $\{b_1,b_2,\ldots,b_k\}$ forms a minimum flow-up generating set for $[\mathbb{Z}/ m\mathbb{Z}]_{(G,\alpha)}$.
	
\end{Cor}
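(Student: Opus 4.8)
The plan is to translate the statement into finite abelian group language and then use the two structural hypotheses to pin down the isomorphism type of $M:=[\mathbb{Z}/m\mathbb{Z}]_{(G,\alpha)}$. Since $M$ is a finite abelian group, its rank $\rk M$ (the size of a minimum generating set) equals the number of nontrivial invariant factors in a decomposition $M\cong\bigoplus_{j}\mathbb{Z}/d_j\mathbb{Z}$ with $1<d_s\mid d_{s-1}\mid\cdots\mid d_1$. The set $\{b_1,\dots,b_k\}$ is already a generating set of size $k$, so it suffices to prove that $M$ has exactly $k$ invariant factors, namely that $M\cong\bigoplus_{i=1}^{k}\mathbb{Z}/\operatorname{ord}(b_i)\mathbb{Z}$ and that the numbers $\operatorname{ord}(b_i)$, suitably reordered, form a divisibility chain of integers all greater than $1$.

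First I would establish the internal direct sum decomposition. Consider the homomorphism
\[
\Phi\colon\ \bigoplus_{i=1}^{k}\mathbb{Z}/\operatorname{ord}(b_i)\mathbb{Z}\ \longrightarrow\ M,\qquad (a_1,\dots,a_k)\longmapsto\sum_{i=1}^{k}a_ib_i .
\]
It is surjective because $\{b_1,\dots,b_k\}$ generates $M$. For injectivity I would order the generators by their leading position, writing $\ell_i$ for the leading position of $b_i$, so that $\ell_1=1<\ell_2<\cdots<\ell_k$; this strict increase is built into the notion of a flow-up generating set ($b_i$ being an $\ell_i$-th flow-up class). Suppose $\sum_i a_ib_i=0$ in $M$. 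Arguing by induction on $j$, assume $a_1b_1=\cdots=a_{j-1}b_{j-1}=0$; evaluating the relation $\sum_{i\ge j}a_ib_i=0$ at the vertex $v_{\ell_j}$ kills every term with $i>j$ (those splines vanish below their leading position $\ell_i>\ell_j$), leaving $a_j\,(b_j)_{v_{\ell_j}}=\bar 0$ in $\mathbb{Z}/m\mathbb{Z}$. Here the \emph{constant} hypothesis is decisive: since every entry of $b_j$ lies in $\{0,c_j\}$ and $(b_j)_{v_{\ell_j}}=c_j\ne\bar 0$, the order of $b_j$ equals the order of the single value $c_j$, that is $\operatorname{ord}(b_j)=m/\gcd(m,c_j)$; hence $a_jc_j\equiv 0\pmod m$ forces $\operatorname{ord}(b_j)\mid a_j$, i.e. $a_jb_j=0$. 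Thus $\Phi$ is injective, and $M\cong\bigoplus_{i=1}^{k}\mathbb{Z}/\operatorname{ord}(b_i)\mathbb{Z}$.

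Next I would read off the invariant factors. We have $\operatorname{ord}(b_1)=m$ (as $b_1=(1,\dots,1)$) and $\operatorname{ord}(b_i)=m/\gcd(m,c_i)$ for $i\ge 2$; moreover every $b_i$ is a nonzero flow-up class, so each $\operatorname{ord}(b_i)>1$. Using the reordering from the third hypothesis, $1=c_{i_1}\mid c_{i_2}\mid\cdots\mid c_{i_k}$, we get $\gcd(m,c_{i_1})\mid\gcd(m,c_{i_2})\mid\cdots\mid\gcd(m,c_{i_k})$ and therefore $\operatorname{ord}(b_{i_k})\mid\operatorname{ord}(b_{i_{k-1}})\mid\cdots\mid\operatorname{ord}(b_{i_1})=m$. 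Hence $\bigl(\operatorname{ord}(b_{i_1}),\dots,\operatorname{ord}(b_{i_k})\bigr)$ is precisely the (nontrivial) invariant factor sequence of $M$, so $\rk M=k$. Since $\{b_1,\dots,b_k\}$ is a flow-up generating set of cardinality $k=\rk M$, it is a minimum flow-up generating set.

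The step I expect to be the real content is the injectivity of $\Phi$: one must check carefully that the leading positions of the $b_i$ are genuinely distinct (this is exactly where the notion of a flow-up generating set, as opposed to an arbitrary set of flow-up classes, is used) and that the ``constant'' hypothesis is invoked correctly to identify $\operatorname{ord}(b_j)$ with the order of the single value $c_j$; without this, the elimination at $v_{\ell_j}$ would only yield $a_j\equiv 0$ modulo a possibly proper divisor of $\operatorname{ord}(b_j)$, and the sum need not be direct. Once the direct sum and the divisibility chain of the orders are in hand, the conclusion is routine structure theory of finite abelian groups.
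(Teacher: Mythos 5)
Your proof is correct. Note that the paper does not actually prove this corollary: its ``proof'' is only a citation to Corollary 2.11 of Bowden--Tymoczko, so there is nothing in the source to compare against line by line; your argument supplies the missing content. Realizing $M$ as an internal direct sum $\bigoplus_{i}\mathbb{Z}/\operatorname{ord}(b_i)\mathbb{Z}$ by eliminating at the leading positions, and then reading off the invariant factors from the divisibility chain of the $c_i$, is the standard route and all the steps check out: the constancy hypothesis is exactly what gives $\operatorname{ord}(b_j)=\operatorname{ord}(c_j)=m/\gcd(m,c_j)$, so that $a_jc_j\equiv 0\pmod m$ already forces $a_jb_j=0$ and the sum is genuinely direct; and $c\mid c'$ does give $\gcd(m,c)\mid\gcd(m,c')$, hence the orders form a divisibility chain of integers all greater than $1$, so the number of invariant factors of $M$ is exactly $k$. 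The one hypothesis you rightly flag as load-bearing is the distinctness of the leading positions $\ell_1<\cdots<\ell_k$: this is not cosmetic but necessary for the statement to be true at all. For instance, on $P_2$ over $\mathbb{Z}/6\mathbb{Z}$ with edge label $2$, the set $\{(1,1),(0,2),(0,4)\}$ consists of constant flow-up classes with $c$-values $1\mid 2\mid 4$ and generates the spline module, yet has size $3$ while the rank is $2$; so the corollary only holds under your reading of ``flow-up generating set'' as containing at most one generator per leading vertex, consistent with the indexed sets $\{\bar{F}^{(1)},\ldots,\bar{F}^{(n)}\}$ of Theorem \ref{basis}. It would strengthen the write-up to state that reading explicitly as part of the hypotheses rather than leaving it implicit.
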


\begin{proof}
	See Corollary 2.11 in \cite{2015}.
\end{proof}

%------------------------------------------------

%-------------------------------------------

\begin{Def}
	Let $(G,\alpha)$ be an edge-labeled graph with $n$ vertices. A \emph{trail} is a sequence of vertices and edges, i.e., $v_1,e_{12},v_2,\ldots,v_{k-1}e_{k-1k},v_k $ in which no edge is repeated. A trail $p^{(i,j)}$ that connects $v_i$ to a vertex $v_j$ is called a $v_j$-trail of $v_i$.

\end{Def}

%------------------------------------------------
Let $p^{(i,j)}$  be a $v_j$-trail of $v_i$\@. We use the notation $(p^{(i,j)})$ to refer to a greatest common divisor of the edge-labels on $p^{(i,j)}$\@  or, for brevity, a greatest common divisor of a $v_j$-trail of $v_i$. The set $\{(p^{(i,j)})\}$ denotes the set of greatest common divisors of all $v_j$-trails of $v_i$. Square brackets ``$[\, ]$'' refer  to a least common multiple of a set.  Note that since $(p^{(i,j)})=(p^{(j,i)})$ we only consider $v_j$-trails of $v_i$ when $j<i$ in this paper.

%---------------------------------------	
In general, we may not be able to construct a flow-up class $F^{(i)}=(0,\ldots,0,f_{v_i}^{(i)},\ldots,f_{v_n}^{(i)}) \in \mathcal{F}_i$ with the first nonzero entry  $f_{v_i}^{(i)} = [\cup_{k=1}^{i-1} \{(p^{(i,k)})\}]$ \@. If such a flow-up class exists, then $f_{v_i}^{(i)}$  is called \emph{the smallest nonzero leading entry} of the elements of $\mathcal{F}_i$\@. Altınok and Sarıoglan~\cite{2019} proved that if the base ring $R$ is a principal ideal domain (PID),  it can always be constructed.

Theorem~ \ref{min} and \ref{exist} are modifications of Altınok and Sarıoglan's results on constructing a flow-up class  $\bar{F}^{(i)}=(\bar{0},\ldots,\bar{0},\bar{f}_{v_i}^{(i)},\ldots,\bar{f}_{v_n}^{(i)}) $ over  $\mathbb{Z}/ m\mathbb{Z}$ (see Corollary~3.4 and Theorem~3.8 in \cite{2019}). As we mentioned before, our base ring is $R = \mathbb{Z}/ m\mathbb{Z}$\@. Since every ideal in $\mathbb{Z}/ m\mathbb{Z}$ is principal, we represent the edge label by the smallest integer  $f$ in its coset $\bar{f} = f+m\mathbb{Z}$ where the corresponding ideal to an edge  is $I=\langle\bar f \rangle$. As a result of this, we can work with $(G, \alpha)$ over $\mathbb{Z}$ with edge ideals represented the smallest elements in cosets and then pass to $\mathbb{Z}/ m\mathbb{Z}$. Working over $\mathbb{Z}$ allows one to  use the greatest common and the least common divisor.  By a greatest common divisor $(p^{(i,j)})$ of a $v_j$-trail of $v_i$ on $(G,\alpha)$ over $\mathbb{Z}/ m\mathbb{Z}$ we mean  the greatest common integer of a $v_j$-trail of $v_i$ over $\mathbb Z$ whose edge labels are represented by  the smallest positive elements in $\mathbb Z$.

\begin{Th}\label{min}
	Let $(G,\alpha)$ be an edge-labeled graph with $n$ vertices over  $\mathbb{Z}/m\mathbb{Z}$ and $v_j$ be a vertex with $ j \ge i $.  Let $\bar{F}^{(i)}=(\bar{0},\ldots,\bar{0},\bar{f}_{v_i}^{(i)},\ldots,\bar{f}_{v_n}^{(i)}) \in \mathcal{F}_i$ be an i-th flow-up class with $i >1$. Let $\{(p^{(j,k)})\}$ be the set of  greatest common divisors of all  $v_k$-trails $p^{(j,k)}$ of $v_j$. If $[\cup_{k=1}^{i-1}\{(p^{(j,k)}) \}]\not\equiv 0 \mod m \mathbb{Z}$, then the entry $\bar{f}_{v_j}^{(i)}$ is a multiple of the number of $[\cup_{k=1}^{i-1}\{(p^{(j,k)}) \}]$ modulo $m$.
\end{Th}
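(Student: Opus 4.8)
The plan is to reduce the claim to a statement about integer splines and then exploit the trail structure to produce the relevant divisibility. First I would recall that, as explained just before the statement, we may work over $\mathbb{Z}$ by representing each edge ideal $\alpha(e)$ by the smallest positive integer generator $l(e)$, and then pass back to $\mathbb{Z}/m\mathbb{Z}$. So fix the integer spline $F^{(i)}=(0,\ldots,0,f_{v_i}^{(i)},\ldots,f_{v_n}^{(i)})$ lifting $\bar F^{(i)}$, with $i-1$ leading zeros (in particular $f_{v_k}^{(i)}=0$ for all $k<i$). Set $D_j := [\cup_{k=1}^{i-1}\{(p^{(j,k)})\}]$, the least common multiple over all $k<i$ of the gcd of edge-labels along each $v_k$-trail of $v_j$. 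The goal is: $f_{v_j}^{(i)} \equiv 0 \pmod{\gcd(D_j,m)}$ — equivalently, since the hypothesis says $D_j\not\equiv 0\bmod m$, that $\bar f_{v_j}^{(i)}$ is a multiple of $D_j$ modulo $m$.

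The key step is a single-trail estimate. I would show: if $p^{(j,k)}$ is any $v_k$-trail of $v_j$, say $v_j=u_0,e_1,u_1,\ldots,e_s,u_s=v_k$, then the spline condition along each edge gives $f_{u_{t-1}}^{(i)}-f_{u_t}^{(i)}\in\langle l(e_t)\rangle$ in $\mathbb{Z}/m\mathbb{Z}$; lifting, $f_{u_{t-1}}^{(i)}-f_{u_t}^{(i)}=c_t\, l(e_t)+d_t m$ for integers $c_t,d_t$. Summing the telescoping difference from $u_0$ to $u_s$ yields
\[
f_{v_j}^{(i)}-f_{v_k}^{(i)} \;=\; \sum_{t=1}^{s} c_t\, l(e_t) \;+\; m\sum_{t=1}^{s} d_t .
\]
Because $k<i$ we have $f_{v_k}^{(i)}=0$, so $f_{v_j}^{(i)}\equiv \sum_t c_t\, l(e_t)\pmod m$. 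Each $l(e_t)$ is divisible by $(p^{(j,k)})=\gcd_t l(e_t)$, hence $f_{v_j}^{(i)}\equiv 0 \pmod{\gcd((p^{(j,k)}),m)}$. This holds simultaneously for every $v_k$-trail and every $k\in\{1,\ldots,i-1\}$, so $f_{v_j}^{(i)}$ is divisible modulo $m$ by the least common multiple of all the numbers $\gcd((p^{(j,k)}),m)$, which is $\gcd(D_j,m)$ by distributivity of gcd over lcm. Finally, invoking the hypothesis $D_j\not\equiv 0\bmod m$: writing $g=\gcd(D_j,m)$, the residue $\bar f_{v_j}^{(i)}$ lies in the ideal $\langle g\rangle=\langle D_j\rangle$ of $\mathbb{Z}/m\mathbb{Z}$, which is exactly the assertion that $\bar f_{v_j}^{(i)}$ is a multiple of $D_j$ modulo $m$.

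I expect the main obstacle to be bookkeeping around the passage between $\mathbb{Z}$ and $\mathbb{Z}/m\mathbb{Z}$: one must be careful that the gcd/lcm language used in the statement really refers to integer gcd's of the chosen smallest-positive-representative edge labels (as the paragraph preceding Theorem~\ref{min} specifies), and that the divisibility conclusion ``multiple of $D_j$ modulo $m$'' is correctly interpreted as membership in the ideal $\langle D_j\rangle=\langle\gcd(D_j,m)\rangle\subseteq\mathbb{Z}/m\mathbb{Z}$. The role of the hypothesis $D_j\not\equiv 0\bmod m$ is precisely to guarantee this ideal is proper so that the statement is nonvacuous; I would make sure to flag where it is used. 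A secondary, purely routine point is justifying $\gcd(\mathrm{lcm}_k(x_k),m)=\mathrm{lcm}_k(\gcd(x_k,m))$, which follows prime-by-prime from the primary decomposition $m=p_1^{m_1}\cdots p_k^{m_k}$ fixed at the start of the section. Everything else is the telescoping sum, which is the standard device for propagating spline conditions along a trail and is immediate once set up.
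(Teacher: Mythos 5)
Your proof is correct, but it takes a genuinely different route from the paper's. The paper's argument is a two-line reduction: it lifts $\bar F^{(i)}$ to an integer tuple $F^{(i)}$ by taking smallest nonnegative coset representatives, asserts that this lift is a flow-up class of $(G,\alpha)$ over $\mathbb{Z}$, cites Corollary~3.4 of Altınok--Sarıoglan \cite{2019} to get the integer divisibility $[\cup_{k=1}^{i-1}\{(p^{(j,k)})\}]\mid f_{v_j}^{(i)}$, and reduces modulo $m$. You instead re-derive exactly the special case of that corollary which is needed: the telescoping sum along each $v_k$-trail combined with $f_{v_k}^{(i)}=0$ for $k<i$ gives $\gcd\bigl((p^{(j,k)}),m\bigr)\mid f_{v_j}^{(i)}$ trail by trail, and the lattice identity $\gcd(\mathrm{lcm}_k x_k,\,m)=\mathrm{lcm}_k\gcd(x_k,m)$ assembles these into divisibility by $\gcd(D_j,m)$, whence $\bar f_{v_j}^{(i)}\in\langle\bar D_j\rangle$. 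The trade-off is clear: the paper's proof is shorter but leans on the cited integer result and on the unexamined claim that the coordinatewise lift of a mod-$m$ spline is an integer spline for the labels $l(e)$ themselves (true in this section because each edge label $a_i=p_1^{n_{i1}}\cdots p_k^{n_{ik}}$ divides $m$, so the pullback of $\langle\bar a_i\rangle$ is $\langle a_i,m\rangle=\langle a_i\rangle$, but not true for arbitrary labels); your version is self-contained and, by carrying the extra $dm$ term and working with $\gcd(\cdot,m)$ throughout, remains valid even when the edge labels do not divide $m$. One small remark: your derivation actually shows that the conclusion holds without the hypothesis $D_j\not\equiv 0\bmod m$ (in that case $\gcd(D_j,m)=m$ forces $\bar f_{v_j}^{(i)}=\bar 0$, which is still a multiple of $\bar D_j=\bar 0$), so that hypothesis serves only to make the statement non-trivial, as you suspected.
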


\begin{proof} Let $\bar{F}^{(i)}=(\bar{0},\ldots,\bar{0},\bar{f}_{v_i}^{(i)},\ldots,\bar{f}_{v_n}^{(i)}) \in \mathcal{F}_i$ be an i-th flow-up class with $i >1$. This gives a flow-up class $F^{(i)} = (0,\ldots,0,f_{v_i}^{(i)},\ldots,f_{v_n}^{(i)})$ on $(G,\alpha)$ over $\mathbb Z$ where  $f_{v_t}^{(i)} \in \mathbb{Z}$ is the smallest nonnegative coset representative of  $\bar{f}_{v_t}^{(i)}$ for $t=i,\ldots,n$. By Corollary~3.4 in \cite{2019}, $[\cup_{k=1}^{i-1}\{(p^{(j,k)}) \}]$  divides $f_{v_j}^{(i)}$ over $\mathbb{Z}$, so there exists $a \in \mathbb{Z}$ such that $$f_{v_j}^{(i)} =a[\cup_{k=1}^{i-1}\{(p^{(j,k)}) \}].$$ 
	Hence  $ \bar{f}_{v_j}^{(i)} =\bar{a}\overline{[\cup_{k=1}^{i-1}\{(p^{(j,k)}) \}]}$. 
	
\end{proof}

%------------------------------------------------

\begin{Th}\label{exist}
	Let $(G,\alpha)$ be an edge-labeled graph with $n$ vertices over  $\mathbb{Z}/m\mathbb{Z}$.  Let $v_i$ be a vertex of $G$ with $i>1$ and the vertex labels  $\bar{f}_{v_j}^{(i)}=\bar{0}$ for all $j<i$ over $\mathbb Z/m\mathbb{Z}$.
	Suppose that $f_{v_i}^{(i)} =[\cup_{k=1}^{i-1}\{(p^{(i,k)}) \}]\not\equiv 0 \mod m$, where $\bar{f}_{v_i}^{(i)} = f_{v_i}^{(i)} +  m\mathbb{Z}$ \@. Then there exists a flow-up class $$\bar{F}^{(i)}=(\bar{0},\ldots,\bar{0},\bar{f}_{v_i}^{(i)},\ldots,\bar{f}_{v_n}^{(i)}) \in \mathcal{F}_i.$$
\end{Th}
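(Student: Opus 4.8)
The plan is to lift the problem to $\mathbb{Z}$, invoke the existence result of Altınok and Sarıoğlan for PIDs, and then push the resulting integer flow-up class forward to $\mathbb{Z}/m\mathbb{Z}$ via the surjection $\rho_*$ from Definition~\ref{homo}. First I would set $c = f_{v_i}^{(i)} = [\cup_{k=1}^{i-1}\{(p^{(i,k)})\}]$, computed over $\mathbb{Z}$ with each edge label represented by its smallest positive coset representative, exactly as explained in the paragraph preceding Theorem~\ref{min}. By Theorem~3.8 in \cite{2019} (the cited existence result for PIDs), since $\mathbb{Z}$ is a PID there is an integer flow-up class $F^{(i)} = (0,\ldots,0,f_{v_i}^{(i)},\ldots,f_{v_n}^{(i)}) \in \mathcal{F}_i$ on $(G,\alpha)$ over $\mathbb{Z}$ whose leading entry is precisely this smallest nonzero leading entry $c$; that is, $f_{v_j}^{(i)} - f_{v_t}^{(i)} \in \langle l_{jt}\rangle$ for every edge $v_jv_t$, and $f_{v_t}^{(i)} = 0$ for $t < i$, $f_{v_i}^{(i)} = c$.

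Next I would apply $\rho_*$ where $\rho : \mathbb{Z} \to \mathbb{Z}/m\mathbb{Z}$ is the natural surjection. Because $\rho$ is a ring homomorphism, $\rho^{-1}(\alpha(e))$ is an ideal of $\mathbb{Z}$ for each edge $e$, so $\rho_*$ maps splines to splines; applying it entrywise to $F^{(i)}$ gives $\bar{F}^{(i)} = (\bar 0,\ldots,\bar 0,\bar f_{v_i}^{(i)},\ldots,\bar f_{v_n}^{(i)}) \in [\mathbb{Z}/m\mathbb{Z}]_{(G,\alpha)}$, where $\bar f_{v_t}^{(i)} = f_{v_t}^{(i)} + m\mathbb{Z}$. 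The leading zeros are preserved since $\rho(0) = \bar 0$. The only thing that could go wrong is that $\bar f_{v_i}^{(i)}$ collapses to $\bar 0$, i.e.\ $c \equiv 0 \bmod m$, which would disqualify $\bar F^{(i)}$ from lying in $\mathcal{F}_i$; but this is ruled out precisely by the hypothesis $c = [\cup_{k=1}^{i-1}\{(p^{(i,k)})\}] \not\equiv 0 \bmod m$. Hence $\bar f_{v_i}^{(i)} \neq \bar 0$ and $\bar f_{v_t}^{(i)} = \bar 0$ for $t < i$, so $\bar F^{(i)} \in \mathcal{F}_i$, as required.

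The main (and essentially only) subtlety is bookkeeping about coset representatives: one must be careful that the greatest common divisors and least common multiple defining $c$ are taken over $\mathbb{Z}$ using the smallest positive representatives of the edge ideals — this is the convention fixed in the discussion before Theorem~\ref{min}, so the hypothesis $c \not\equiv 0 \bmod m$ is stated in exactly the form needed. Everything else is a direct transport of structure along $\rho_*$, so there is no real obstacle; the content of the theorem is entirely in the cited PID-existence result of \cite{2019}, and our contribution here is just to observe that it survives reduction modulo $m$ precisely when the leading entry does not vanish.
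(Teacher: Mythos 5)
Your proposal is correct and follows essentially the same route as the paper's own proof: lift the data to $\mathbb{Z}$ using smallest nonnegative representatives, invoke Theorem~3.8 of \cite{2019} to obtain an integer flow-up class with leading entry $[\cup_{k=1}^{i-1}\{(p^{(i,k)})\}]$, and push it forward along $\rho_*$ to $\mathbb{Z}/m\mathbb{Z}$. Your explicit remark that the hypothesis $c \not\equiv 0 \bmod m$ is exactly what prevents the leading entry from collapsing to $\bar 0$ is a point the paper leaves implicit, but the argument is the same.
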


\begin{proof}
	We choose the smallest non-negative representatives $f_{v_j}^{(i)}=0 $ in $\mathbb Z$ of the vertex labels $\bar{f}_{v_j}^{(i)}=\bar{0}$ for all $j<i$. Let $f_{v_i}^{(i)} = [\cup_{k=1}^{i-1}\{(p^{(i,k)}) \}]  \not\equiv 0 \mod m $, where $\bar{f}_{v_i}^{(i)} = f_{v_i}^{(i)} +  m\mathbb{Z}$ \@. To show the existence of other vertex labels $\bar{f}_{v_{i+1}}^{(i)},\ldots,\bar{f}_{v_n}^{(i)}$ it is sufficient to prove the existence  of $f_{v_{i+1}}^{(i)},\ldots,{f}_{v_n}^{(i)}$  such that $F^{(i)} = (0,\ldots,0,f_{v_i}^{(i)},\ldots,f_{v_n}^{(i)})$ is a flow-up class over $\mathbb Z$. By Theorem~3.8 in \cite{2019}, we know that there exist $f_{v_{i+1}}^{(i)},\ldots,{f}_{v_n}^{(i)}$. Since  $(\rho_*f^{(i)})_{v_{j}}= \rho(f_{v_{j}}^{(i)}) $, $\bar{f}_{v_{j}}^{(i)} = f_{v_{j}}^{(i)} +  m\mathbb{Z}$ also exists. This concludes the existence of $\bar{F}^{(i)}.$ 
\end{proof}

%------------------------------------------------

%------------------------------------------------
%------------------------------------------------

The following theorem specifies minimum flow-up generating sets for complete graphs over $\mathbb{Z}/ m\mathbb{Z}$  with edge-labels $a_i=p_1^{n_{i1}}p_2^{n_{i2}}\ldots p_k^{n_{ik}}$, where $ 0 \le n_{ij} \le m_j$ for each $j=1,\ldots,k$ and $i=1,2,\ldots, r_n$, in either increasing or decreasing order.

\begin{Th}	\label{son}
	Let $(K_n,\alpha)$ be an edge-labeled complete graph with ordered edge-labels $a_1,a_2,\ldots,a_{r_n}.$
	
	\begin{enumerate}
		\item Assume that the edge-label set  $\{a_1,a_2,\ldots,a_{r_n}\}$ is ordered with $$a_{r_n} \mid a_{r_n-1} \mid \cdots \mid a_3 \mid a_2 \mid a_1 \mid m = p_1^{m_1}p_2^{m_2} \cdots p_k^{m_k}, \quad m \neq  a_1.$$ Then the set
		{\tiny	\begin{equation*}
				B_m =	\begin{Bmatrix}
					
					\left( \begin{array}{c}
						1 \\
						1\\
						\vdots\\
						\vdots \\
						1\\
						1
					\end{array} \right) , &
					
					\left( \begin{array}{c}
						0 \\
						0\\
						\vdots\\
						0\\
						0\\
						a_1 \\
						0
					\end{array} \right), &
					
					\left( \begin{array}{c}
						0 \\
						0\\
						\vdots\\
						0\\
						a_2 \\
						0\\
						0
					\end{array} \right), &
					
					\left( \begin{array}{c}
						0 \\
						\vdots\\
						0\\
						a_4 \\
						0\\
						0\\
						0
					\end{array} \right)
					
					&,\ldots, &
					
					\left( \begin{array}{c}
						0\\
						a_{(r_{n-2}+1)} \\
						0\\
						\vdots\\
						\vdots\\
						0
					\end{array} \right), &
					
					\left( \begin{array}{c}
						a_{(r_{n-1}+1)} \\
						0\\
						\vdots\\
						\vdots\\
						0\\
						0
					\end{array} \right)
					
				\end{Bmatrix}
		\end{equation*}}
		is a minimum flow-up generating set over  $\mathbb{Z}/ m\mathbb{Z}$\@. Thus, the rank of $[\mathbb{Z}/ m\mathbb{Z}]_{(K_n,\alpha)}$ is $n$.
		
		\item Assume that the edge-label set  $\{a_1,a_2,\ldots,a_{r_n}\}$ is ordered with $$a_1 \mid a_2 \mid a_3 \mid  \cdots \mid a_{r_n -1} \mid a_{r_n} \mid m = p_1^{m_1}p_2^{m_2}\cdots p_k^{m_k}, \quad  m \neq  a_{r_n}.$$ Then the set
		{\tiny	\begin{equation*}
				B_m =\begin{Bmatrix}

					\left( \begin{array}{c}
						1 \\
						\vdots\\
						\vdots\\
						1
					\end{array} \right) , &
					
					\left( \begin{array}{c}
						a_{(r_n-(n-2))} \\
						\vdots\\
						a_{(r_n-(n-2))} \\
						a_{(r_n-(n-2))} \\
						0
					\end{array} \right)
					, &
					\left( \begin{array}{c}
						a_{(r_n-(n-3))} \\
						\vdots\\
						a_{(r_n-(n-3))} \\
						0\\
						0
					\end{array} \right) &
					
					,\ldots, &
					
					\left( \begin{array}{c}
						a_{(r_n-1)} \\
						a_{(r_n-1)} \\
						0\\
						\vdots\\
						0
					\end{array} \right)
					, &
					\left( \begin{array}{c}
						a_{r_n} \\
						0\\
						\vdots\\
						0\\
						0
					\end{array} \right)
					
				\end{Bmatrix}
		\end{equation*}}
		is a minimum flow-up generating set over  $\mathbb{Z}/ m\mathbb{Z}$\@. Thus, the rank of $[\mathbb{Z}/ m\mathbb{Z}]_{(K_n,\alpha)}$ is $n$.
	\end{enumerate}

\end{Th}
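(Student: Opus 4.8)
The plan is to verify that each of the two candidate sets $B_m$ satisfies the three bullet conditions of Criterion~\ref{criteria}, from which minimality and rank $n$ follow immediately. The real content is to show $B_m$ is in fact a \emph{flow-up generating set}; for this I will invoke Theorem~\ref{basis} together with the trail computations of Theorems~\ref{min} and~\ref{exist}. Throughout I will represent edge labels by their smallest positive integer coset representatives and work over $\mathbb Z$, passing to $\mathbb Z/m\mathbb Z$ at the end as licensed by the discussion preceding Theorem~\ref{min}.

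\emph{Case (1): $a_{r_n}\mid\cdots\mid a_1\mid m$.} First I would pin down, for each $i$ with $2\le i\le n$, the quantity $[\,\cup_{k=1}^{i-1}\{(p^{(i,k)})\}\,]$, i.e. the lcm over $k<i$ of the gcd of all $v_k$-trails of $v_i$ in $K_n$. Because $K_n$ is complete, among the $v_k$-trails of $v_i$ there is the single edge $e_{ki}=v_kv_i$, so $(p^{(i,k)})$ divides $l_{ki}$; and since every other $v_k$-trail passes through at least one edge, the gcd of trail labels is a divisor of $l_{ki}$ but one also has the two-edge trails $v_i v_t v_k$ giving $\gcd(l_{it},l_{tk})$, etc. The key numerical claim I must establish is that, under the construction of Section~\ref{completegraph} and the divisibility chain $a_{r_n}\mid\cdots\mid a_1$, this lcm equals exactly the label $a_{\ell(i)}$ appearing as the nonzero entry of the $i$th generator in $B_m$ — concretely, the leading edge into the ``new'' vertex block — and that this value is $\not\equiv 0\bmod m$ because it divides $a_1\mid m$ with $m\neq a_1$ forcing it properly... actually it divides $a_1$ and $a_1\mid m$, so it is a genuine nonzero element of $\mathbb Z/m\mathbb Z$ precisely when it does not equal $0$, which holds since it divides $a_1<m$. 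Granting this identification, Theorem~\ref{exist} produces a flow-up class $\bar F^{(i)}\in\mathcal F_i$ with leading entry exactly that lcm; the explicit vectors in $B_m$ (all of whose nonzero entries equal the single value $a_{\ell(i)}$, i.e. constant flow-up classes) are then shown to be splines by directly checking the edge conditions — each edge sees a difference in $\{0,\pm a_{\ell(i)}\}$, and $a_{\ell(i)}$ lies in the relevant edge ideal because of the divisibility ordering. Finally Theorem~\ref{basis} (or Theorem~\ref{min} directly) shows any flow-up class $\bar A^{(i)}$ has leading entry a multiple of $a_{\ell(i)}$, so the $\bar F^{(i)}$ generate; the divisibility $a_{r_n}\mid\cdots\mid a_1$ is exactly the reordering condition on the $c_i$'s, and $b_1=(1,\dots,1)$ is the first generator. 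Criterion~\ref{criteria} then gives that $B_m$ is a minimum flow-up generating set, so $\rk[\mathbb Z/m\mathbb Z]_{(K_n,\alpha)}=n$.

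\emph{Case (2): $a_1\mid\cdots\mid a_{r_n}\mid m$.} This is structurally identical; the only change is that the trail computation now selects, as the lcm of gcds of $v_k$-trails of $v_i$, the \emph{largest}-index edge label feeding vertex $v_i$ from the lower block, which is $a_{r_n-(n-i)}$ in the notation of the displayed set. One again checks $a_{r_n-(n-i)}\mid m$ with the value nonzero mod $m$, builds the constant flow-up classes via Theorem~\ref{exist}, confirms they are splines, applies Theorem~\ref{basis}/\ref{min} for the generating property, and notes the $c_i$'s already come ordered by divisibility. Criterion~\ref{criteria} closes the argument.

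\emph{Main obstacle.} The routine part is the bookkeeping of Criterion~\ref{criteria} and the spline check for explicit constant vectors. The genuinely delicate step is the trail lemma: showing that in the complete graph $K_n$ with the recursive edge-ordering of Section~\ref{completegraph} and the given monotone divisibility chain, the lcm $[\,\cup_{k<i}\{(p^{(i,k)})\}\,]$ collapses to the \emph{single} edge label named in $B_m$. One must argue that longer trails never lower the gcd below that value — which uses that every edge label divides (resp. is divided by) the distinguished one — and that no smaller lcm arises from combining several $v_k$-trails. I expect to handle this by induction on $n$ using $K_{n+1}=K_n+S_n$: the trails of the new central vertex $v_{n+1}$ to old vertices are controlled by the star edges $l_{r_n+1},\dots,l_{r_{n+1}}$, and the inductive hypothesis controls everything internal to $K_n$. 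Getting this induction clean — in particular verifying the ``$\not\equiv 0\bmod m$'' hypothesis of Theorems~\ref{min} and~\ref{exist} at each stage, which is where the assumption $m\neq a_1$ (resp. $m\neq a_{r_n}$) is used — will be the crux.
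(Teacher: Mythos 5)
Your proposal follows essentially the same route as the paper: existence of the constant flow-up classes via Theorem~\ref{exist}, the ``leading entry is a multiple'' property via Theorem~\ref{min}, generation via Theorem~\ref{basis}, and minimality via Corollary~\ref{criteria}. The trail computation you flag as the crux (identifying $[\cup_{k<i}\{(p^{(i,k)})\}]$ with the single edge label appearing in $B_m$) is left implicit in the paper's proof as well, so your sketch is, if anything, more explicit about where the real work lies.
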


%------------------------------------------------

%------------------------------------------------

\begin{proof}
	We begin with the proof of part~(1).  The existence of
	
	{\tiny\begin{center}
			
			\begin{tabular}{c}
				$ F^{(1)} = \left( \begin{array}{c}
				1 \\
				1\\
				\vdots\\
				\vdots\\
				1\\
				1
				\end{array} \right)$ ,
				
				$	F^{(2)} = \left( \begin{array}{c}
				* \\
				*\\
				\vdots\\
				*\\
				*\\
				a_1 \\
				0
				\end{array} \right),$
				$	F^{(3)} = \left( \begin{array}{c}
				* \\
				*\\
				\vdots\\
				*\\
				a_2\\
				0 \\
				0
				\end{array} \right),$
				$	F^{(4)} = \left( \begin{array}{c}
				*\\	
				\vdots\\
				*\\
				a_4\\
				0\\
				0\\
				0
				\end{array} \right),$

				\ldots,
				
				$F^{(n-1)} = \left( \begin{array}{c}
				*\\
				a_{(r_{n-2}+1)} \\
				0\\
				\vdots\\
				\vdots\\
				0
				\end{array} \right),$
				
				$F^{(n)} = \left( \begin{array}{c}
				a_{(r_{n-1}+1)} \\
				0\\
				\vdots\\
				\vdots\\
				0\\
				0
				\end{array} \right)
				$
			\end{tabular} 
	\end{center}} 
	
	follows directly from  Theorem \ref{exist} with $a_i$ for $ i= 1,2,4,\ldots,r_{n-2}+1,r_{n-1}+1$, where $a_i$ corresponds to the smallest nonzero leading entry of some flow-up class $F^{(j)} $. Let $A^{(j)}=(0,\ldots,0,g_{v_j},\ldots,g_{v_n})$ be an arbitrary flow-up class. Then, the entry $g_{v_j}$ is a multiple of the entry $a_i$ by Theorem \ref{min}.	
	Let  $B=\{F^{(1)}, F^{(2)}, \ldots, F^{(n-1)}, F^{(n)}\}$.  Then by Theorem \ref{basis}, $B$ generates the spline module $[\mathbb{Z}/ m\mathbb{Z}]_{(K_n,\alpha)}$ as a $\mathbb{Z}$-module. Without loss of generality, for each $i$, we can set the entries in $F^{(i)}$ denoted by ``$*$'' equal to $0$. Thus, the set $B$ is equal to $B_m$. In general, the generating set $B_m$ obtained  is a minimum flow-up generating set  by Corollary \ref{criteria}.\\
	
	Part(2) can be proved in a similar way to part(1).
	
\end{proof}

%------------------------------------------------
%------------------------------------------------

\begin{figure}[h!]
	\centering
	\includegraphics[width=0.35\linewidth]{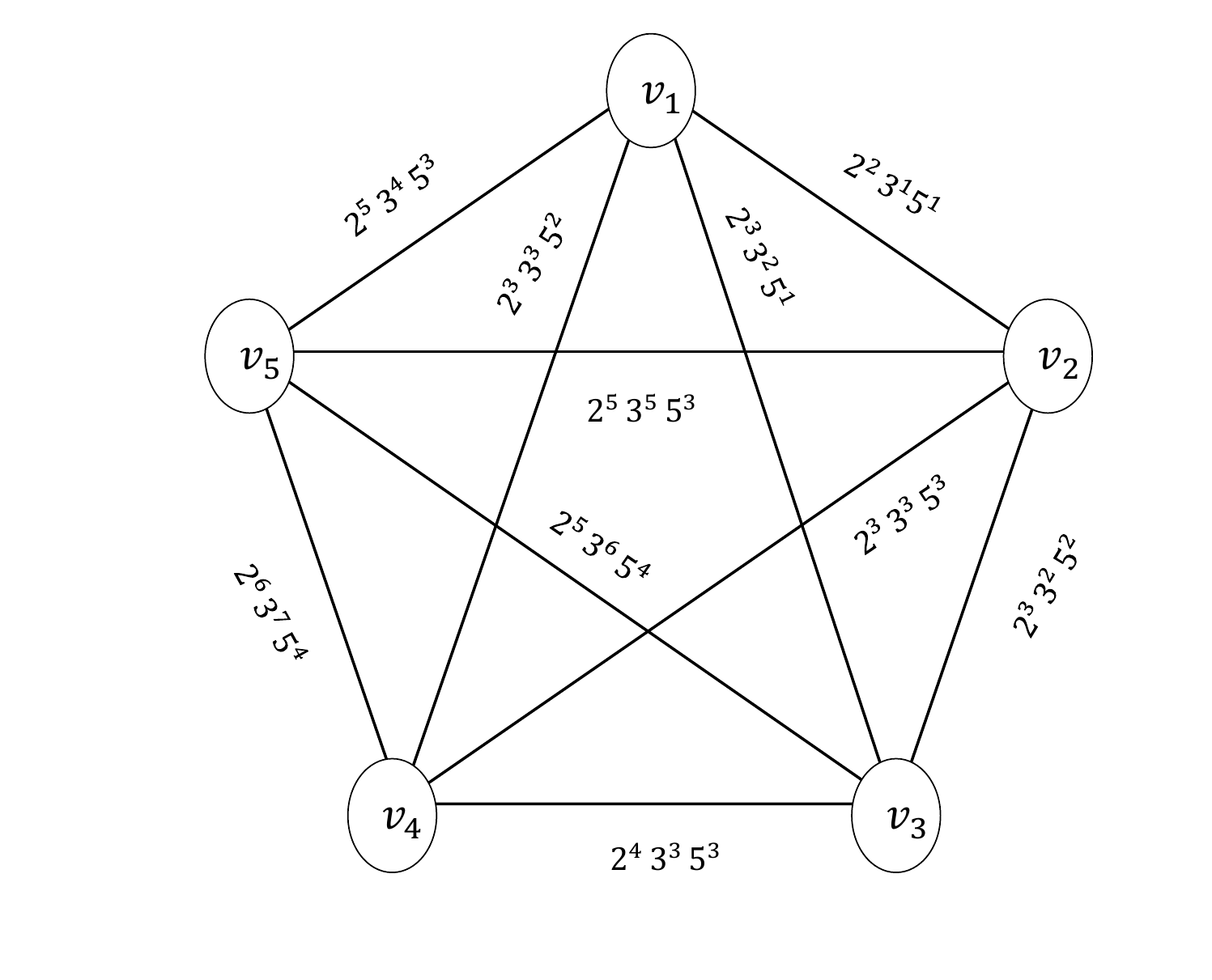}
	\caption{Edge-labeled complete graph $(K_5,\alpha)$ over $\mathbb{Z}/ (2^83^{10}5^7)\mathbb{Z}$ }
	\label{fig:k5}
\end{figure}
\begin{Ex}
	
	Let \begin{math} (K_5,\alpha) \end{math} be an edge-labeled complete graph over $\mathbb{Z}/ (2^83^{10}5^7)\mathbb{Z}$ as in Figure \ref{fig:k5}\@. By Theorem \ref{son} part(2), the set 
	{\tiny	\begin{equation*}
			B_m = \begin{Bmatrix}
				\left( \begin{array}{c}
					1 \\
					1\\
					1\\
					1\\
					1
				\end{array} \right) , &
				
				\left( \begin{array}{c}
					2^53^45^3 \\
					2^53^45^3 \\
					2^53^45^3 \\
					2^53^45^3 \\
					0
				\end{array} \right)
				, &
				\left( \begin{array}{c}
					2^53^55^3 \\
					2^53^55^3 \\
					2^53^55^3 \\
					0 \\
					0
				\end{array} \right), &
				
				\left( \begin{array}{c}
					2^53^65^4 \\
					2^53^65^4 \\
					0 \\
					0\\
					0
				\end{array} \right), &
				
				\left( \begin{array}{c}
					2^63^75^4 \\
					0 \\
					0 \\
					0\\
					0
				\end{array} \right)
				
			\end{Bmatrix}
	\end{equation*}}
	is a minimum flow-up generating set. Hence, the rank of $[\mathbb{Z}/ (2^83^{10}5^7)\mathbb{Z}]_{(K_5,\alpha)}$ is 5.
	
\end{Ex}

%------------------------------------------------

Let  $a$ be a zero divisor in  $\mathbb{Z}/ m\mathbb{Z}$. The following corollary specifies a minimum flow-up generating sets for splines on complete graphs over $\mathbb{Z}/ m\mathbb{Z}$ with edge-labels 	$$l_1=a^{i_1}, l_2=a^{i_2},\ldots, l_{r_n}=a^{i_{r_n}}$$   in either increasing or decreasing order. 

\begin{Cor}
	
	Let $(K_n,\alpha)$ be an edge-labeled complete graph with ordered edge-labels $a^{i_1},a^{i_2},\ldots,a^{i_{r_n}}$\@.
	
	\begin{enumerate}
		\item  We assume that the edge-label set $\{a^{i_1},a^{i_2},\ldots,a^{i_{r_n}}\}$ is ordered with $$a^{i_{r_n}} \mid a^{i_{r_n-1}} \mid \cdots  \mid a^{i_3} \mid a^{i_2} \mid a^{i_1} \mid a^k, \quad i_{r_n}\geq 1, i_1 < k.$$ Then the set
		{\tiny	\begin{equation*}
				B_{a^k} = \begin{Bmatrix}
					\left( \begin{array}{c}
						1 \\
						1\\
						\vdots\\
						\vdots\\
						1\\
						1
					\end{array} \right) , &
					
					\left( \begin{array}{c}
						0 \\
						0\\
						\vdots\\
						0\\
						0\\
						a^{i_1} \\
						0
					\end{array} \right), &
					
					\left( \begin{array}{c}
						0 \\
						0\\
						\vdots\\
						0\\
						a^{i_2} \\
						0\\
						0
					\end{array} \right), &
					
					\left( \begin{array}{c}
						0 \\
						\vdots\\
						0\\
						a^{i_4} \\
						0\\
						0\\
						0
					\end{array} \right)
					&
					,\ldots, &
					
					\left( \begin{array}{c}
						0\\
						a^{i_{(r_{n-2}+1)}} \\
						0\\
						\vdots\\
						\vdots\\
						0
					\end{array} \right), &
					
					\left( \begin{array}{c}
						a^{i_{(r_{n-1}+1)}}\\
						0\\
						\vdots\\
						\vdots\\
						0\\
						0
					\end{array} \right)
				\end{Bmatrix}
		\end{equation*}}
		is a minimum flow-up generating set over $\mathbb{Z}/ m\mathbb{Z}$. Thus, the rank of $[\mathbb{Z}/ m\mathbb{Z}]_{(K_n,\alpha)}$ is $n$.
		
		\item We assume that the edge-label set $\{a^{i_1},a^{i_2},\ldots,a^{i_{r_n}}\}$ is ordered with  $$a^{i_1} \mid a^{i_2} \mid a^{i_3} \mid \cdots \mid a^{i_{r_n-1}} \mid  a^{i_{r_n}} \mid a^k, \quad i_1\geq 1, i_{r_n} <k.$$ Then the set
		{\tiny	\begin{equation*}
				B_{a^k} = \begin{Bmatrix}
					\left( \begin{array}{c}
						1 \\
						\vdots\\
						\vdots\\
						1
					\end{array} \right) , &
					
					\left( \begin{array}{c}
						a^{i_{(r_n-(n-2))}} \\
						\vdots\\
						a^{i_{(r_n-(n-2))}} \\
						a^{i_{(r_n-(n-2))}} \\
						0
					\end{array} \right)
					, &
					\left( \begin{array}{c}
						a^{i_{(r_n-(n-3))}} \\
						\vdots\\
						a^{i_{(r_n-(n-3))}} \\
						0\\
						0
					\end{array} \right) &
					
					,\ldots, &
					
					\left( \begin{array}{c}
						a^{i_{(r_n-1)}} \\
						a^{i_{(r_n-1)}} \\
						0\\
						\vdots\\
						0
					\end{array} \right)
					, &
					\left( \begin{array}{c}
						a^{i_{r_n}} \\
						0\\
						\vdots\\
						0\\
						0
					\end{array} \right)
				\end{Bmatrix}
		\end{equation*}}
		is a  minimum flow-up generating set over $\mathbb{Z}/ m\mathbb{Z}$. Thus, the rank of $[\mathbb{Z}/ m\mathbb{Z}]_{(K_n,\alpha)}$ is $n$.
	\end{enumerate}

\end{Cor}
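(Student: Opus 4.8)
The plan is to deduce the Corollary directly from Theorem~\ref{son} by verifying that the hypothesis on the powers $a^{i_1},\dots,a^{i_{r_n}}$ is a special case of the divisibility hypothesis there. First I would observe that for a fixed zero divisor $a$ in $\mathbb{Z}/m\mathbb{Z}$, the relation $a^s \mid a^t$ (in $\mathbb{Z}$, with the smallest positive coset representatives) holds whenever $s \ge t$, so the chain $a^{i_{r_n}} \mid \cdots \mid a^{i_1}$ in part~(1) is equivalent to $i_{r_n} \ge i_{r_n-1} \ge \cdots \ge i_1$, and similarly for part~(2). Then I would set $a_j := a^{i_j}$ for $j = 1,\dots,r_n$ and check that the divisibility chain $a_{r_n} \mid a_{r_n-1} \mid \cdots \mid a_1 \mid m$ required by Theorem~\ref{son}(1) is satisfied: the internal divisibilities are exactly the ones just noted, and $a_1 = a^{i_1} \mid a^k \mid m$ because $a^k$ is itself a zero divisor dividing $m$ (here one uses that $a \mid m$ in the sense of the representatives, or more carefully that the ideal $\langle a^{i_1}\rangle$ contains $\langle a^k\rangle \supseteq \langle m\rangle = 0$) and $i_1 < k$ guarantees $m \neq a_1$ exactly as required. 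Part~(2) is handled the same way with the chain reversed.

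With the hypotheses of Theorem~\ref{son} verified, the conclusion is immediate: the set $B_m$ produced there, with each $a_j$ replaced by $a^{i_j}$, is precisely the set $B_{a^k}$ displayed in the Corollary, and it is a minimum flow-up generating set of rank $n$. So the body of the proof is essentially two short paragraphs, one per part, each saying ``apply Theorem~\ref{son} with $a_j = a^{i_j}$, noting the chain of divisibilities holds.'' I would also remark explicitly on the role of the side condition $i_1 < k$ (resp. $i_{r_n} < k$): it is what enforces $m \neq a_1$ (resp. $m \neq a_{r_n}$), i.e. that the top edge label is a genuine proper ideal and not the whole ring nor the zero ideal, so that all $n$ flow-up classes are nontrivial.

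The only genuine subtlety — and the step I expect to need the most care — is the bookkeeping around ``$a^k$ divides $m$'' and the passage between ideals in $\mathbb{Z}/m\mathbb{Z}$ and integer representatives. One must be careful that $a^{i_1},\dots,a^{i_{r_n}},a^k$ are all taken as the smallest positive integer representatives, that $a^k \equiv 0$ is allowed (then $m \mid a^k$), and that the statement of Theorem~\ref{son} is being invoked with $m$ playing its own role while $a^k$ is only an intermediate bound; in fact the cleanest route is to note $a^{i_1} \mid a^k$ over $\mathbb{Z}$ and $a^k \mid$ (some power of $m$), hence $\langle a^{i_1} \rangle \supseteq \langle a^k\rangle$ as ideals, which is all that the proof of Theorem~\ref{son} actually used. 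Once this dictionary is pinned down, invoking Theorem~\ref{son} and reading off $B_{a^k}$ is routine, and the corollary follows.
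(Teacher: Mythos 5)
Your proposal is correct and matches the paper's (implicit) proof: the paper states this corollary without proof as an immediate specialization of Theorem~\ref{son}, obtained by setting $a_j = a^{i_j}$ and checking the divisibility chain, which is exactly what you do, including the observation that $i_1 < k$ (resp.\ $i_{r_n} < k$) is what enforces $m \neq a_1$ (resp.\ $m \neq a_{r_n}$). One trivial slip that does not affect the argument: over $\mathbb{Z}$ one has $a^s \mid a^t$ precisely when $s \le t$, so the chain in part~(1) is equivalent to $i_1 \ge i_2 \ge \cdots \ge i_{r_n}$ rather than the reversed inequalities you wrote.
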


%------------------------------------------------

%Remark that if $m$ has at least two prime factors the generating set may loose a rank whenever two adjacent edges are labelled with distinct prime factors.

\begin{Cor}Let $(K_n,\alpha)$ be an edge-labeled complete graph with ordered edge-labels $a^{i_1},a^{i_2},\ldots,a^{i_{r_n}}$ so that $a^{i_{r_n}} \mid a^{i_{r_n-1}} \mid \cdots  \mid a^{i_3} \mid a^{i_2} \mid a^{i_1} \mid a^k$ with $i_{r_n}\geq 1$ and  $i_1 < k$, or vice versa. Then the rank of any connected spanning subgraph of $K_n$ is $n.$
\end{Cor}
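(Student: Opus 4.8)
The plan is to compare the spline module of a connected spanning subgraph with the one on $K_n$ itself. Let $H$ be a connected spanning subgraph of $K_n$. Since $H$ spans $K_n$, it has the same vertex set $V=\{v_1,\dots,v_n\}$, and its edge set is a subset of $E(K_n)$; write $\alpha'$ for the restriction of $\alpha$ to $E(H)$. Note that every edge label of $K_n$ is a power $a^{i_j}$ with $i_j\ge 1$ (the hypothesis on the chain forces all exponents to be at least $1$), hence a proper ideal of $\mathbb{Z}/m\mathbb{Z}$, so $(H,\alpha')$ is again an edge-labeled graph of the kind considered here. First I would record the elementary observation that deleting edges only removes constraints on vertex labelings, so every spline on $(K_n,\alpha)$ is a spline on $(H,\alpha')$; that is,
$$[\mathbb{Z}/m\mathbb{Z}]_{(K_n,\alpha)} \ \subseteq\ [\mathbb{Z}/m\mathbb{Z}]_{(H,\alpha')}$$
as $\mathbb{Z}$-submodules of $(\mathbb{Z}/m\mathbb{Z})^{n}$.

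Next I would invoke the preceding corollary: under exactly these hypotheses on the chain $a^{i_{r_n}}\mid\cdots\mid a^{i_1}\mid a^k$ (or its reverse), it gives $\rk [\mathbb{Z}/m\mathbb{Z}]_{(K_n,\alpha)}=n$. The one nonformal ingredient I would then use is the standard fact that the minimum number of generators of a submodule of a finitely generated module over a principal ideal domain does not exceed that of the ambient module: if the ambient module is written as $\mathbb{Z}^{d}/K$ with $d$ generators, then a submodule is the image of a submodule of $\mathbb{Z}^{d}$, which is free of rank at most $d$. (Equivalently, for a finite abelian group $A$ one has $\rk A=\max_{p}\dim_{\mathbb{F}_p}A[p]$, and $A[p]$ can only shrink when $A$ is replaced by a subgroup.) Applied to the containment above, this yields
$$\rk [\mathbb{Z}/m\mathbb{Z}]_{(H,\alpha')} \ \ge\ \rk [\mathbb{Z}/m\mathbb{Z}]_{(K_n,\alpha)} \ =\ n.$$

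Finally, Theorem 4.1 of \cite{2015} bounds the rank of any $\mathbb{Z}$-module of splines over $\mathbb{Z}/m\mathbb{Z}$ on a graph by its number of vertices, and $H$ is a connected graph with $n$ vertices, so $\rk [\mathbb{Z}/m\mathbb{Z}]_{(H,\alpha')}\le n$. Combining the two inequalities gives $\rk [\mathbb{Z}/m\mathbb{Z}]_{(H,\alpha')}=n$, as claimed. I do not expect a serious obstacle: beyond the bookkeeping of the containment of spline modules and the restriction of the labeling, the only real input is the PID submodule-generation fact, which is routine; the point that needs care is simply to state that inclusion correctly (fewer edges gives a larger spline module) and to note that connectedness of $H$ places it within the framework where the vertex-count upper bound applies.
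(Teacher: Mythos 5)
Your argument is correct. Note first that the paper states this corollary without any proof at all, so there is nothing to compare line by line; the route the paper implicitly suggests is to re-run its flow-up machinery (Theorems \ref{min} and \ref{exist} together with Theorem \ref{basis} and Corollary \ref{criteria}) on the subgraph $H$ directly, observing that trails in $H$ are trails in $K_n$, that the lcm of gcds over the smaller trail set still yields nonzero leading entries $a^{j}$, and that the resulting constant flow-up classes satisfy the divisibility chain of Corollary \ref{criteria}. Your route is genuinely different and cleaner: you sandwich the rank using (i) the containment $[\mathbb{Z}/m\mathbb{Z}]_{(K_n,\alpha)}\subseteq[\mathbb{Z}/m\mathbb{Z}]_{(H,\alpha')}$ (fewer edges, fewer constraints), (ii) the fact that over a PID a submodule of a finitely generated module needs no more generators than the ambient module, which gives the lower bound $n$ from the preceding corollary, and (iii) the vertex-count upper bound coming from the surjection $\rho_*$ of Example 2.10 (equivalently Theorem 4.1 of \cite{2015}). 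All three ingredients are sound --- in particular the PID fact is exactly right and is where the argument would fail over a general base ring --- and your argument has the advantage of avoiding any re-verification that the flow-up classes on $H$ are constant or that their leading entries form a divisibility chain. What it does not give you, and what the flow-up route would, is an explicit minimum generating set for $H$; it only gives the rank, but that is all the corollary asserts.
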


\begin{Cor}
	Let $(K_n,\alpha)$ be an edge-labeled complete graph with ordered edge-labels $a_1,a_2,\ldots,a_{r_n}.$
	\begin{enumerate}
		\item Assume that the edge-labeled set  $\{a_1,a_2,\ldots,a_{r_n}\}$ is ordered with $$a_{r_n} \mid a_{r_n-1} \mid \cdots \mid a_1 \mid m = p_1^{m_1}p_2^{m_2} \cdots p_k^{m_k}, \quad m \neq  a_1.$$ If the spanning subgraph of $K_n$ is the wheel graph $W_n$ or the star graph $S_{n-1}$ with the central vertex labeled $v_1$, then for each of the graphs obtained the minimum generating sets is the same for $K_n$.
		
		\item Assume that the edge-labeled set  $\{a_1,a_2,\ldots,a_{r_n}\}$ is ordered with $$a_1 \mid a_2 \mid \cdots \mid a_{r_n} \mid m = p_1^{m_1}p_2^{m_2}\cdots p_k^{m_k}, \quad  m \neq  a_{r_n}.$$ If the spanning subgraph of $K_n$ is the wheel graph $W_n$ or the star graph $S_{n-1}$ with the central vertex labeled $v_n$, then for each of the  graphs obtained the minimum generating sets is the same for $K_n$.
	\end{enumerate}
\end{Cor}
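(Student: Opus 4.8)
The plan is to deduce the corollary from Theorem~\ref{son} by showing that, under the stated divisibility hypotheses, the $\mathbb Z$-modules of splines on $K_n$, on the wheel $W_n$, and on the star $S_{n-1}$ (with the indicated hub) all coincide; a minimum generating set depends only on the module, so the set $B_m$ of Theorem~\ref{son} then serves for all three graphs. I will describe part~(1), with hub $v_1$; part~(2) is the mirror image with $v_1$ replaced by $v_n$ and the divisibility chain reversed. The key point is an elementary feature of the edge-labelling of Section~\ref{completegraph}: when $a_{r_n}\mid a_{r_n-1}\mid\cdots\mid a_1\mid m$, for every pair $i<j$ with $i\ge 2$ the label $a(v_iv_j)$ of the edge $v_iv_j$ divides both $a(v_1v_i)$ and $a(v_1v_j)$; here $a(e)$ denotes the label of the edge $e$.

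First I would prove this by tracking indices. In the recursive ordering the edge $v_iv_j$ has index $r_{j-1}+i$, the spoke $v_1v_j$ has index $r_{j-1}+1$, and the spoke $v_1v_i$ has index $r_{i-1}+1$. Using $r_j-r_{j-1}=j-1$ one verifies $r_{j-1}+i\ge r_{j-1}+1$ and $r_{j-1}+i\ge r_{i-1}+1$ for all $2\le i<j$, so both spokes appear no later in the ordering than $v_iv_j$, and hence — by the divisibility chain, in which a later label divides an earlier one — carry labels that are multiples of $a(v_iv_j)$, as claimed.

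Granting the observation, let $S_{n-1}$ be the star with hub $v_1$ and let $F=(f_{v_1},\ldots,f_{v_n})\in[\mathbb Z/m\mathbb Z]_{(S_{n-1},\alpha)}$. For $2\le i<j$, $f_{v_i}-f_{v_j}=(f_{v_i}-f_{v_1})-(f_{v_j}-f_{v_1})$ lies in $\langle a(v_1v_i)\rangle+\langle a(v_1v_j)\rangle\subseteq\langle a(v_iv_j)\rangle$, so $F$ satisfies the edge condition on $v_iv_j$; the conditions on the spokes $v_1v_j$ hold because $F$ is a star-spline. Thus $F\in[\mathbb Z/m\mathbb Z]_{(K_n,\alpha)}$, so $[\mathbb Z/m\mathbb Z]_{(S_{n-1},\alpha)}\subseteq[\mathbb Z/m\mathbb Z]_{(K_n,\alpha)}$; the opposite inclusion is immediate since every $K_n$-spline is in particular an $S_{n-1}$-spline, so these two $\mathbb Z$-modules are equal. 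As $S_{n-1}\subseteq W_n\subseteq K_n$ as edge sets and adding edges only shrinks the spline module, $[\mathbb Z/m\mathbb Z]_{(K_n,\alpha)}\subseteq[\mathbb Z/m\mathbb Z]_{(W_n,\alpha)}\subseteq[\mathbb Z/m\mathbb Z]_{(S_{n-1},\alpha)}$, and the equality of the ends forces all three modules to coincide. Hence $B_m$ from Theorem~\ref{son}(1) is a minimum generating set for $W_n$ and $S_{n-1}$ as well, which is exactly the assertion. Part~(2) runs identically with hub $v_n$, the chain $a_1\mid a_2\mid\cdots\mid a_{r_n}\mid m$, and the dual estimate $r_{n-1}+j\ge r_{j-1}+i$ for $i<j\le n-1$.

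The main obstacle is purely bookkeeping: making sure the two index comparisons — ``$v_iv_j$ versus $v_1v_j$ inside the same layer'' and ``$v_iv_j$ versus the earlier spoke $v_1v_i$'' — are both handled correctly so that the divisibility chain really applies. If one would rather not compare whole modules, an equivalent route is to apply Theorem~\ref{basis} and Corollary~\ref{criteria} directly: $B_m$ lies in $[\mathbb Z/m\mathbb Z]_{(S_{n-1},\alpha)}$ because it lies in the $K_n$-splines, every flow-up class $\bar A^{(i)}$ on the star has $\bar g_{v_1}=\bar 0$ so the spoke $v_1v_i$ forces $\bar g_{v_i}\in\langle a(v_1v_i)\rangle$ — exactly the leading entry of the $i$-th member of $B_m$ — and the constancy and divisibility conditions in Corollary~\ref{criteria} do not involve the graph, so $B_m$ is a minimum flow-up generating set for the star, and for $W_n$ since every spoke is still present. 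In part~(2) this variant instead uses the two spokes $v_{i-1}v_n$ and $v_iv_n$ together with $a(v_{i-1}v_n)\mid a(v_iv_n)$.
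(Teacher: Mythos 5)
The paper states this corollary without any proof, so there is nothing to compare your argument against; judged on its own, your proof is correct and complete. The index bookkeeping is right: with the recursive ordering of Section~\ref{completegraph} the edge $v_iv_j$ ($i<j$) has index $r_{j-1}+i$, which for $i\ge 2$ exceeds both $r_{j-1}+1$ (the spoke $v_1v_j$) and $r_{i-1}+1$ (the spoke $v_1v_i$), so under the chain $a_{r_n}\mid\cdots\mid a_1$ the label of $v_iv_j$ divides both spoke labels; the identity $f_{v_i}-f_{v_j}=(f_{v_i}-f_{v_1})-(f_{v_j}-f_{v_1})$ then shows every star-spline is already a $K_n$-spline, and the sandwich $S_{n-1}\subseteq W_n\subseteq K_n$ forces all three spline modules to be equal, which is strictly stronger than the stated conclusion and immediately transfers the set $B_m$ of Theorem~\ref{son}. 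The dual computation for part~(2) (all non-spoke edges have index at most $r_{n-1}$, hence their labels divide every spoke label $a_{r_{n-1}+i}$) is equally sound. Your fallback route via Theorem~\ref{basis} and Corollary~\ref{criteria} is redundant once the modules are shown equal, but it is also valid; the only caveat worth recording in a final write-up is that ``the minimum generating set is the same'' should be read as ``$B_m$ remains a minimum generating set,'' which your module-equality argument delivers for free.
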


The following corollary specifies  minimum flow-up generating sets for splines on complete graphs over $\mathbb{Z}/ p^t\mathbb{Z}$ with edge-labels 	$$l_1=p^{i_1}, l_2=p^{i_2},\ldots, l_{r_n}=p^{i_{r_n}}$$   in either increasing or decreasing order.

\begin{Cor}
	Let $(K_n,\alpha)$ be an edge-labeled complete graph with ordered edge-labels $p^{i_1},p^{i_2},\ldots,p^{i_{r_n}}$\@.
	
	\begin{enumerate}
		\item  Assume that the edge-label set $\{p^{i_1},p^{i_2},\ldots,p^{i_{r_n}}\}$ is ordered with $$p^{i_{r_n}} \mid p^{i_{r_n-1}} \mid \cdots \mid  p^{i_3} \mid p^{i_2} \mid p^{i_1} \mid p^t, \quad i_{r_n}\geq 1, i_1 < t.$$ Then the set
		{\tiny	 \begin{equation*}
				B_{p^k} = \begin{Bmatrix}
					\left( \begin{array}{c}
						1 \\
						1\\
						\vdots\\
						\vdots\\
						1\\
						1
					\end{array} \right) , &
					
					\left( \begin{array}{c}
						0 \\
						0\\
						\vdots\\
						0\\
						0\\
						p^{i_1} \\
						0
					\end{array} \right), &
					
					\left( \begin{array}{c}
						0 \\
						0\\
						\vdots\\
						0\\
						p^{i_2} \\
						0\\
						0
					\end{array} \right), &
					
					\left( \begin{array}{c}
						0 \\
						\vdots\\
						0\\
						p^{i_4} \\
						0\\
						0\\
						0
					\end{array} \right)
					&
					,\ldots, &
					
					\left( \begin{array}{c}
						0\\
						p^{i_{(r_{n-2}+1)}} \\
						0\\
						\vdots\\
						\vdots\\
						0
					\end{array} \right), &
					
					\left( \begin{array}{c}
						p^{i_{(r_{n-1}+1)}}\\
						0\\
						\vdots\\
						\vdots\\
						0\\
						0
					\end{array} \right)
				\end{Bmatrix}
		\end{equation*}}
		is a  minimum flow-up generating set over $\mathbb{Z}/ p^t\mathbb{Z}$. Thus, the rank of $[\mathbb{Z}/ p^t\mathbb{Z}]_{(K_n,\alpha)}$ is $n$.
		
		\item Assume that the edge-label set $\{p^{i_1},p^{i_2},\ldots,p^{i_{r_n}}\}$ is ordered with  $$p^{i_1} \mid p^{i_2} \mid p^{i_3} \mid  \cdots \mid p^{i_{r_n-1}} \mid  p^{i_{r_n}} \mid p^t, \quad i_1\geq 1, i_{r_n} <t.$$ Then the set
		
		{\tiny	\begin{equation*}
				B_{p^k} = \begin{Bmatrix}
					\left( \begin{array}{c}
						1 \\
						\vdots\\
						\vdots\\
						1
					\end{array} \right) , &
					
					\left( \begin{array}{c}
						p^{i_{(r_n-(n-2))}} \\
						\vdots\\
						p^{i_{(r_n-(n-2))}} \\
						p^{i_{(r_n-(n-2))}} \\
						0
					\end{array} \right)
					, &
					\left( \begin{array}{c}
						p^{i_{(r_n-(n-3))}} \\
						\vdots\\
						p^{i_{(r_n-(n-3))}} \\
						0\\
						0
					\end{array} \right) &
					
					,\ldots, &
					
					\left( \begin{array}{c}
						p^{i_{(r_n-1)}} \\
						p^{i_{(r_n-1)}} \\
						0\\
						\vdots\\
						0
					\end{array} \right)
					, &
					\left( \begin{array}{c}
						p^{i_{r_n}} \\
						0\\
						\vdots\\
						\\
						0
					\end{array} \right)
				\end{Bmatrix}
		\end{equation*}}
		is a minimum flow-up generating set over $\mathbb{Z}/ p^t\mathbb{Z}$. Thus, the rank of $[\mathbb{Z}/ p^t\mathbb{Z}]_{(K_n,\alpha)}$ is $n$.
	\end{enumerate}
\end{Cor}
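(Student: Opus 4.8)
The statement is a direct specialization of Theorem~\ref{son} (equivalently, of the preceding corollary on edge-labels $a^{i_j}$) to the case of a single prime, so my plan is simply to check that the hypotheses transfer correctly under that specialization. Concretely, in the setup of Section~\ref{modulo m} I would take $k=1$, $p_1=p$, $m_1=t$, so that $m=p^t$ is a legitimate primary decomposition and every edge-label has the form $a_i=p^{n_{i1}}$; writing $n_{i1}=i_j$ recovers exactly the labelings $l_1=p^{i_1},\dots,l_{r_n}=p^{i_{r_n}}$ appearing here. It then only remains to verify that (i) the divisibility/zero-divisor conditions of Theorem~\ref{son} hold after this substitution and (ii) the set displayed in the statement is precisely the image of $B_m$ under $a_i\mapsto p^{i_j}$.

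For part~(1): the chain $a_{r_n}\mid a_{r_n-1}\mid\cdots\mid a_1\mid m$ becomes $p^{i_{r_n}}\mid p^{i_{r_n-1}}\mid\cdots\mid p^{i_1}\mid p^t$, which is equivalent to $i_{r_n}\le i_{r_n-1}\le\cdots\le i_1<t$. The hypothesis $i_{r_n}\ge 1$ then forces every exponent to satisfy $1\le i_j<t$, so each $p^{i_j}$ is a genuine (nonzero) zero divisor in $\mathbb{Z}/p^t\mathbb{Z}$ and labels a proper nontrivial ideal, as required by the standing assumptions; and $i_1<t$ is exactly the condition $m\ne a_1$ of Theorem~\ref{son}. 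Applying Theorem~\ref{son}(1) yields that $B_m$, with each $a_i$ replaced by the corresponding $p^{i_j}$, is a minimum flow-up generating set, and this substituted set is exactly the set $B_{p^k}$ in the statement; hence $\rk[\mathbb{Z}/p^t\mathbb{Z}]_{(K_n,\alpha)}=n$. Part~(2) follows in the same manner from Theorem~\ref{son}(2): the increasing chain $p^{i_1}\mid p^{i_2}\mid\cdots\mid p^{i_{r_n}}\mid p^t$ corresponds to $1\le i_1\le\cdots\le i_{r_n}<t$, and the condition $m\ne a_{r_n}$ to $i_{r_n}<t$.

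There is essentially no obstacle here beyond bookkeeping; the single point worth stating explicitly is that a prime power $p^t$ is an admissible modulus for Theorem~\ref{son}, i.e.\ that the primary decomposition $m=p_1^{m_1}\cdots p_k^{m_k}$ may degenerate to $k=1$, and that divisibility among powers of one prime reduces to comparison of exponents — both immediate. Consequently no fresh invocation of Criteria~\ref{criteria} or of Theorems~\ref{basis}, \ref{min}, \ref{exist} is needed: the minimality, the flow-up structure, and the rank count are all inherited verbatim from Theorem~\ref{son}.
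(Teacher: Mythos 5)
Your proposal is correct and matches the paper's (implicit) argument: the corollary is stated without a separate proof precisely because it is the specialization of Theorem~\ref{son} (equivalently of the preceding corollary with $a=p$) to $k=1$, $m=p^t$, and your verification that the exponent conditions $1\le i_j<t$ translate the divisibility and $m\neq a_1$ (resp.\ $m\neq a_{r_n}$) hypotheses is exactly the bookkeeping required. Nothing further is needed.
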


For any positive integer $b$ we denote the largest connected edge-labeled subgraph of an edge-labeled complete graph $(K_n,\alpha)$ over $\mathbb{Z} / p^t \mathbb{Z}$ whose edges are labeled by $p^{b_i}$ , ${b\leq b_i< t}$, by $H^{(j,b)}= (V^{(j,b)},E^{(j,b)})$, where the smallest-index vertex is $v_j$.  The following theorem gives flow-up classes which form a minimum generating set for $[\mathbb{Z} / p^t \mathbb{Z}]_{(K_n,\alpha)}$.
\begin{Th}\label{thm:m=p^t}
	Let  $(K_n,\alpha)$ be an edge-labeled complete graph over $\mathbb{Z} / p^t \mathbb{Z}$ with unordered edge-labels 	$l_1=p^{i_1}, l_2=p^{i_2},\ldots, l_{r_n}=p^{i_{r_n}}$.  Then there is a flow-up class $F^{(i)}= (0,\ldots,0,f_{v_i}^{(i)},\ldots,f_{v_n}^{(i)})$ with $f_{v_i}^{(i)}=[\cup_{k=1}^{i-1}\{(p^{(i,k)}) \}]=p^{a_i}$  for $i> 1$ such that
	
	$$f_{v}^{(i)}=\left\{ \begin{array}{c c}
	p^{a_i} & \quad \text{if} \quad v \in V^{(i,a_i+1)}\\
	0 &  \quad \text{otherwise}
	
\end{array} \right.$$ Moreover, 
the rank of $[\mathbb{Z} / p^t \mathbb{Z}]_{(K_n,\alpha)}$ is $n$.
\end{Th}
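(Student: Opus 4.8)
The plan is to pin down the exponent $a_i$ by a connectivity (``bottleneck'') computation, write the asserted flow-up class down explicitly and check by hand that it is a spline, and then assemble the minimum generating set and finish with Criteria~\ref{criteria}. For the first task, for $k<i$ I would let $d(i,k)$ denote the largest integer $b$ for which $v_i$ and $v_k$ lie in the same connected component of the subgraph of $K_n$ spanned by the edges whose label-exponent is at least $b$; this is well defined and at least $1$, since all edge labels are proper (so every exponent is $\geq1$) and $K_n$ is connected. As every edge label is a power of $p$, the greatest common divisor $(p^{(i,k)})$ of a $v_k$-trail of $v_i$ is $p$ raised to the smallest exponent along that trail, so the least common multiple of $\{(p^{(i,k)})\}$ is $p$ raised to the largest such ``smallest exponent'' over all $v_k$-trails; a one-line bottleneck argument (a trail of minimal exponent $c$ uses only edges of exponent $\geq c$, and a path inside the edges of exponent $\geq d(i,k)$ has minimal exponent $\geq d(i,k)$) shows this equals $d(i,k)$. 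Hence
\[
p^{a_i}=\big[\cup_{k=1}^{i-1}\{(p^{(i,k)})\}\big]=p^{\,\max_{k<i}d(i,k)},\qquad\text{so}\qquad a_i=\max_{k<i}d(i,k).
\]
In particular $a_i$ is at most the largest edge-exponent, which is $<t$, so $p^{a_i}\not\equiv0\bmod p^t$; and no $v_k$ with $k<i$ lies in $V^{(i,a_i+1)}$, as that would force $d(i,k)\geq a_i+1$. So $v_i$ really is the smallest-index vertex of $H^{(i,a_i+1)}$, the labeling in the statement satisfies $f^{(i)}_{v_j}=0$ for all $j<i$, and $f^{(i)}_{v_i}=p^{a_i}\neq0$.

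Next I would check that this labeling is a spline. Let $e=v_uv_w$ be an edge with label $p^{c_e}$. If both endpoints lie in $V^{(i,a_i+1)}$, or neither does, then $f^{(i)}_{v_u}-f^{(i)}_{v_w}=0$ lies in every ideal. If exactly one endpoint, say $v_u$, lies in $V^{(i,a_i+1)}$, then $f^{(i)}_{v_u}-f^{(i)}_{v_w}=\pm p^{a_i}$, and necessarily $c_e\leq a_i$: otherwise $e$ would be an edge of the subgraph of edges of exponent $\geq a_i+1$ joining $v_u$ to $v_w$, forcing $v_w\in V^{(i,a_i+1)}$. Hence $\pm p^{a_i}\in\langle p^{c_e}\rangle$, so the labeling is a spline, and it is the flow-up class $F^{(i)}\in\mathcal F_i$ with $f^{(i)}_{v_i}=p^{a_i}=[\cup_{k=1}^{i-1}\{(p^{(i,k)})\}]$ asserted in the theorem; note this step uses only the definition of $H^{(i,a_i+1)}$ and not completeness of $K_n$.

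For the rank, set $F^{(1)}=(1,\dots,1)$ and $B=\{F^{(1)},F^{(2)},\dots,F^{(n)}\}$ with $F^{(i)}$ as above for $i\geq2$. Since $[\cup_{k=1}^{i-1}\{(p^{(i,k)})\}]=p^{a_i}\not\equiv0$, Theorem~\ref{min} shows the $i$-th entry of every class in $\mathcal F_i$ is a multiple of $p^{a_i}=f^{(i)}_{v_i}$ (and every class in $\mathcal F_1$ trivially has first entry a multiple of $1$), so Theorem~\ref{basis} gives that $B$ generates $[\mathbb Z/p^t\mathbb Z]_{(K_n,\alpha)}$ as a $\mathbb Z$-module. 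Finally $F^{(1)}=(1,\dots,1)$, each $F^{(i)}$ with $i\geq2$ is a constant flow-up class with entries in $\{0,p^{a_i}\}$, and $\{1,p^{a_2},\dots,p^{a_n}\}$ consists of powers of $p$, which can be reordered into a divisibility chain; so Criteria~\ref{criteria} applies, $B$ is a minimum flow-up generating set, whence $\rk[\mathbb Z/p^t\mathbb Z]_{(K_n,\alpha)}=n$.

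The only real work is the identification in the first paragraph of the lattice-theoretic quantity $[\cup_{k=1}^{i-1}\{(p^{(i,k)})\}]$ with the connectivity threshold $\max_{k<i}d(i,k)$, together with the observation that this threshold is exactly the exponent $a_i$ making $V^{(i,a_i+1)}$ the support of $F^{(i)}$; after that the spline check is a short case analysis and the rank statement is formal. I expect the delicate point to be precisely that $m=p^t$ enters only at the last step, but there it is essential: only when the modulus is a prime power are all the leading entries $p^{a_i}$ powers of a single prime, hence totally ordered by divisibility, which is what Criteria~\ref{criteria} requires.
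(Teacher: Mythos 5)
Your proof is correct and follows essentially the same route as the paper: an explicit case analysis on edges to verify the spline condition, followed by Theorem~\ref{basis} (via Theorem~\ref{min}) and Criteria~\ref{criteria} to get minimality and rank $n$. Your first paragraph (identifying $a_i$ with the bottleneck threshold $\max_{k<i}d(i,k)$ and checking that no $v_k$ with $k<i$ lies in $V^{(i,a_i+1)}$, so the flow-up property actually holds) supplies details the paper's proof leaves implicit, and is a worthwhile addition rather than a departure.
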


\begin{proof}
	To check  whether  $F^{(i)}= (0,\ldots,0,f_{v_i}^{(i)},\ldots,f_{v_n}^{(i)})$ with given entries is a spline or not, we pick any two distinct vertices $v, w$ of $K_n$. If they are both in $V^{(i,a_i+1)}$, the difference of the corresponding vertex labelings  $f^{(i)}_{v}-f^{(i)}_{w}=p^{a_i}-p^{a_i}=0$ satisfies the spline condition. If $v$ is in $V^{(i,a_i+1)}$ but $w$ is not, the  label of the edge $vw$ is $p^a$ where $a\leq a_i$. In this case the difference $f^{(i)}_{v}-f^{(i)}_{w}=p^{a_i}$ is divisible by $p^a$. Hence the spline condition is again satisfied. If they are both not in $V^{(i,a_i+1)}$, the difference is zero.  The spline condition is again satisfied.  Hence $F^{(i)}= (0,\ldots,0,p^{a_i},*,\ldots,*)$ is a spline where ``$*$'' in $F^{(i)}$ refers  to either $p^{a_i}$ or $0$.
	
	Let $B=\{F^{(1)}, F^{(2)}, \ldots, F^{(n-1)}, F^{(n)}\}$, where

	{\tiny	\begin{center}

			\begin{tabular}{c c c c c}
				$ F^{(1)} = \left( \begin{array}{c}
				1 \\
				1\\
				\vdots\\
				1\\
				1
				\end{array} \right) $, &
				
				$F^{(2)} = \left( \begin{array}{c}
				* \\
				\vdots\\
				
				*\\
				p^{a_2} \\
				0
			\end{array} \right)$, &
			
			\ldots \;, &
			
			$F^{(n-1)} = \left( \begin{array}{c}
			*\\
			p^{a_{n-1}}\\
			0\\
			\vdots\\
			
			0
		\end{array} \right) $, &
		
		$	F^{(n)} = \left( \begin{array}{c}
		p^{a_n} \\
		0\\
		\vdots\\
		\\
		
		0
	\end{array} \right)
	.$
	\end{tabular} \end{center} }

\noindent
Note that $F^{(1)}= (1,\ldots, 1{\tiny })^T$ is a trivial spline.  Then by Theorem \ref{basis} and Corollary \ref{criteria}, $B$ is a minimum  flow-up generating set for the spline  modulo  $[\mathbb{Z}/ p^t\mathbb{Z}]_{(K_n,\alpha)}$ as a $\mathbb{Z}$-module.  It follows that 	 $\rk [\mathbb{Z} / p^t \mathbb{Z}]_{(K_n,\alpha)}=n$.

\end{proof}

\begin{figure}[h]
\centering
\includegraphics[width=0.35\linewidth]{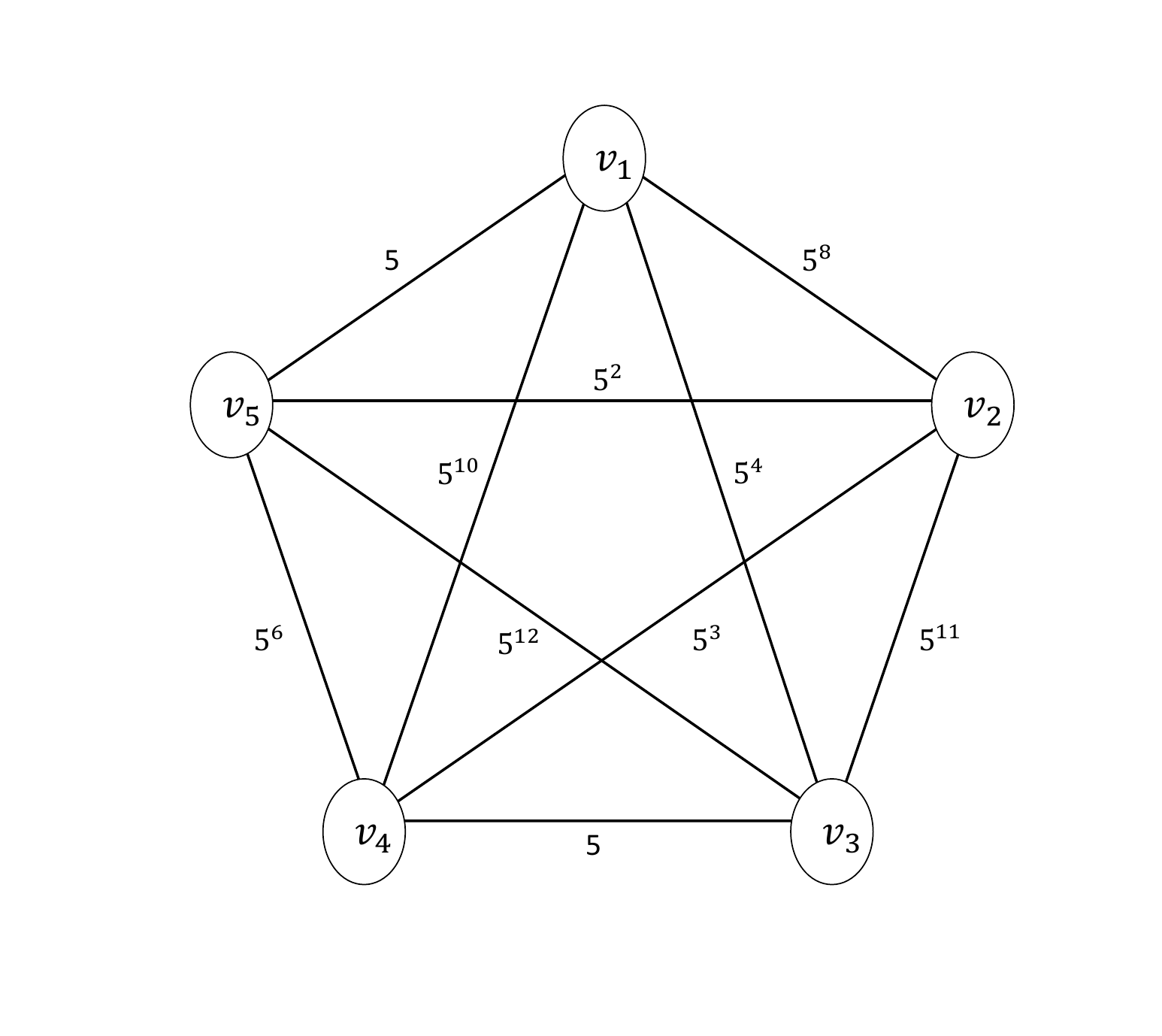}
\caption{Edge-labeled complete graph over $\mathbb{Z} / 5^{15} \mathbb{Z}$}
\label{fig:k5exunordered}
\end{figure}
\begin{Ex}

Let $(K_5,\alpha)$ be an edge-labeled complete graph over $\mathbb{Z} / 5^{15} \mathbb{Z}$ as in Figure \ref{fig:k5exunordered}. 	
Then a minimum flow-up generating set $B$ for $[\mathbb{Z} / 5^{15} \mathbb{Z}]_{(K_5,\alpha)}$ can be obtained from Theorem~\ref{thm:m=p^t} as follows: 
{\tiny	\begin{equation*}
		B= \begin{Bmatrix}
			\left( \begin{array}{c}
				1 \\
				1\\
				1\\
				1\\
				1
				
			\end{array} \right) ,
			
			\left( \begin{array}{c}
				5^8 \\
				0\\
				5^8\\
				5^8\\
				0
			\end{array} \right),
			
			\left( \begin{array}{c}
				5^{11}\\
				0\\
				5^{11}\\
				0\\
				0
			\end{array} \right),

			\left( \begin{array}{c}
				0\\
				5^{10}\\
				0\\
				0\\
				0
			\end{array} \right),
			
			\left( \begin{array}{c}
				5^{12}\\
				0\\
				0\\
				0\\
				0
			\end{array} \right)

		\end{Bmatrix}.
\end{equation*}}
\end{Ex}

\begin{Th}\label{Th:rk K_n+1}

Let $m=pq$, where $p$ and $q$ are distinct prime numbers and $(K_n,\alpha)$ be an edge-labeled complete graph over $R=\mathbb{Z} / m \mathbb{Z}$.  If the module $[\mathbb{Z} / m \mathbb{Z}]_{(K_n,\alpha)}$ has a minimum generating set consisting of the trivial spline $(1,\ldots,1)$ and  $i-1$ other flow-up splines for each $1 < i \le n$ each of whose entries is contained  in the ideal  $\langle p \rangle $ then we can either preserve the rank or increase the rank of $R_{(K_{n+1},\alpha)}$ exactly one by adding $S_n$  to $K_n$ in the following ways:
\begin{itemize}
	\item If all edge labels of $S_n$ are contained in the ideal $\langle p\rangle$, then  
	$$\rk [\mathbb{Z} / m \mathbb{Z}]_{(K_{n+1},\alpha)} = \rk  [\mathbb{Z} / m \mathbb{Z}]_{(K_n,\alpha)}+1.$$
	\item If one of the edge labels of $S_n$ is contained in the ideal $\langle q \rangle$ and the others are contained in the ideal $\langle p \rangle$, then $\rk [\mathbb{Z} / m \mathbb{Z}]_{(K_{n+1},\alpha)} = \rk  [\mathbb{Z} / m \mathbb{Z}]_{(K_n,\alpha)}$.{\tiny \label{key}}
\end{itemize}

\end{Th}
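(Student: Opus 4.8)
The plan is to study the restriction homomorphism
$$\pi\colon [\mathbb{Z}/m\mathbb{Z}]_{(K_{n+1},\alpha)}\longrightarrow [\mathbb{Z}/m\mathbb{Z}]_{(K_n,\alpha)}$$
that deletes the label on the newly added central vertex $v_{n+1}$; this is a well-defined $\mathbb{Z}$-module homomorphism because $K_n\subseteq K_{n+1}=K_n+S_n$ carries the same edge labels inside $K_n$. I would write $M=[\mathbb{Z}/m\mathbb{Z}]_{(K_n,\alpha)}$ and $N=[\mathbb{Z}/m\mathbb{Z}]_{(K_{n+1},\alpha)}$. The first step is to extract the content of the hypothesis: since $M$ is generated by the trivial spline $(1,\dots,1)$ together with flow-up splines each of whose entries lies in $\langle p\rangle$, every spline $G=(g_{v_1},\dots,g_{v_n})\in M$ satisfies $g_{v_1}\equiv g_{v_2}\equiv\cdots\equiv g_{v_n}\pmod p$. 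Indeed, writing $G=c\,(1,\dots,1)+\sum_i a_i b_i$ with every entry of each $b_i$ in $\langle p\rangle$, each entry of $G$ is congruent to $c$ modulo $p$.

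Next I would describe $\ker\pi$ and the image of $\pi$ in the two cases. An element of $\ker\pi$ is a spline $(\bar 0,\dots,\bar 0,\bar x)$ on $K_{n+1}$, and the spline condition along the edges of $S_n$ says exactly that $\bar x$ lies in the intersection of their ideals: this intersection is $\langle p\rangle\cong\mathbb{Z}/q\mathbb{Z}$ in the first case, and $\langle p\rangle\cap\langle q\rangle=\langle pq\rangle=\langle\bar 0\rangle$ in the second, so $\ker\pi\cong\mathbb{Z}/q\mathbb{Z}$ and $\ker\pi=0$ respectively. In both cases $\pi$ is surjective: given $G\in M$, all entries of $G$ agree modulo $p$ with some $c$ by the first step, so in the first case the labeling $(G,c)$ is already a spline on $K_{n+1}$, while in the second case I would choose $\bar x$ with $\bar x\equiv c\pmod p$ and $\bar x\equiv g_{v_l}\pmod q$ (possible by the Chinese Remainder Theorem, where $v_lv_{n+1}$ is the unique edge of $S_n$ with ideal $\langle q\rangle$), and then $(G,\bar x)$ is a spline on $K_{n+1}$; in either case $\pi$ sends this spline to $G$. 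In the second case $\pi$ is therefore an isomorphism, hence $N\cong M$ and $\rk N=\rk M$, which is the second assertion.

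It remains to finish the first case, where I would pass to primary components. Since $m=pq$ with $p\neq q$, each module $L$ in sight splits functorially as $L=L_{(p)}\oplus L_{(q)}$ with $L_{(p)}$ an $\mathbb{F}_p$-vector space and $L_{(q)}$ an $\mathbb{F}_q$-vector space, and $\rk L=\max(\dim_{\mathbb{F}_p}L_{(p)},\dim_{\mathbb{F}_q}L_{(q)})$. The generators of $M$ other than $(1,\dots,1)$ have all entries in $\langle p\rangle$, hence are killed by $q$ and lie in $M_{(q)}$, while the $p$-component of $(1,\dots,1)$ is nonzero and generates $M_{(p)}$; thus $\dim M_{(p)}=1$ and, since $\rk M\geq 2$, $\rk M=\dim M_{(q)}$. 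Applying the exact functors $(-)_{(p)}$ and $(-)_{(q)}$ to the short exact sequence $0\to\mathbb{Z}/q\mathbb{Z}\to N\xrightarrow{\pi} M\to 0$ yields $N_{(p)}\cong M_{(p)}$ and $0\to\mathbb{Z}/q\mathbb{Z}\to N_{(q)}\to M_{(q)}\to 0$, so $\dim N_{(p)}=1$ and $\dim N_{(q)}=\dim M_{(q)}+1=\rk M+1$; hence $\rk N=\max(1,\rk M+1)=\rk M+1$, as claimed.

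The only genuinely delicate point is that in the first case the sequence $0\to\mathbb{Z}/q\mathbb{Z}\to N\to M\to 0$ need not split, so rather than constructing a splitting I would read off the rank one primary component at a time, as above; the rest (verifying the spline condition for the extended labelings, and the standard description of minimal generating sets of finite $\mathbb{Z}/m\mathbb{Z}$-modules) is routine. Alternatively, when the given generators $b_i$ are constant flow-up classes one can avoid primary decomposition altogether: the labelings $(b_i,\bar 0)$, together with $(1,\dots,1)$ and $(\bar 0,\dots,\bar 0,\bar p)$, form a flow-up generating set for $N$ by Theorem~\ref{basis}, and it is a minimum flow-up generating set by Corollary~\ref{criteria}, giving $\rk N=\rk M+1$ directly.
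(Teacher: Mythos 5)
Your proof is correct, and it takes a genuinely different route from the paper's. The paper stays entirely inside its flow-up machinery: in the first case it invokes Theorem~\ref{exist} (via the trail gcd/lcm computation $[\cup_k\{(p^{(n+1,k)})\}]=p$) to produce the new flow-up class $(0,\ldots,0,p)$, in the second case it observes that this lcm is $pq\equiv 0$ so no such class exists, and it then asserts that the old generators, extended to $K_{n+1}$, together with (or without) the new class form a minimum flow-up generating set by Theorem~\ref{basis} and Corollary~\ref{criteria} --- leaving the extension of the old generators to splines on $K_{n+1}$ implicit. You instead analyze the restriction map $\pi\colon N\to M$ that forgets the label at $v_{n+1}$: your computation of $\ker\pi$ as the intersection of the $S_n$-edge ideals recovers exactly the paper's existence/non-existence dichotomy, and your surjectivity argument (all entries of a spline in $M$ agree mod $p$ by the hypothesis on the generators, then extend by $c$ or by a CRT choice) supplies precisely the extension step the paper glosses over. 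In the second case you get an isomorphism outright, and in the first case you sidestep the question of whether $0\to\mathbb{Z}/q\mathbb{Z}\to N\to M\to 0$ splits by passing to primary components and using $\rk L=\max(\dim_{\mathbb{F}_p}L_{(p)},\dim_{\mathbb{F}_q}L_{(q)})$, which replaces the paper's appeal to Corollary~\ref{criteria}. The trade-off: your argument is more self-contained and rigorous about the two points the paper treats loosely (extension and minimality), at the cost of importing the structure theory of modules over $\mathbb{Z}/pq\mathbb{Z}$; the paper's version keeps the whole argument in the flow-up language used elsewhere and directly exhibits the generating set, which is what Theorem~\ref{rk K_n} later consumes. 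Your closing remark correctly identifies the paper's route as the alternative.
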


\begin{proof}
Assume that $B =	\{(1,\ldots,1), *\}	$
is a minimum flow-up generating set for $[\mathbb{Z}/ m\mathbb{Z}]_{(K_n,\alpha)}$ where ``$*$'' is $i-1$ other flow-up splines for each $1 < i \le n$ each of whose entries is contained  in the ideal  $\langle p\rangle$.
\begin{itemize}
	\item Let all edge labels of $S_n$ be contained in the ideal $\langle p \rangle$. This means that the labels of the edges are  $p$. Therefore there exists an $n$ flow-up class $F^{(n)}=(0,0,\ldots,0,f_{v_{n+1}}^{(n)} )$ with  $f_{v_{n+1}}^{(n)} =[\cup_{k=1}^{n}\{(p_i^{(n+1,k) }) \}]=p$. By construction of an edge-labeled complete graph $K_{n+1}$,
	$$ B =	\{(1,\ldots,1), *, (0,\ldots,0,p)\}	$$
	is a minimum flow-up generating set for $[\mathbb{Z}/ m\mathbb{Z}]_{(K_{n+1},\alpha)}$, where ``$*$'' is $i-1$ other flow-up splines for each $1 < i \le n$ each of whose entries is contained  in the ideal  $\langle p \rangle$. Thus, 
	$$	\rk [\mathbb{Z} / m \mathbb{Z}]_{(K_{n+1},\alpha)} = \rk  [\mathbb{Z} / m \mathbb{Z}]_{(K_n,\alpha)}+1 .$$

	\item Let one of the edge labels of $S_n$ be contained in the ideal $\langle q\rangle$ and the others are contained in the ideal $\langle p\rangle$. Then there is no $n$ flow-up class   $F^{(n)}=(0,0,\ldots,0,f_{v_{n+1}}^{(n)} )$ with  $f_{v_{n+1}}^{(n)} =[\cup_{k=1}^{n}\{(p_i^{(n+1,k) }) \}]\ne 0$ since  $[\cup_{k=1}^{n}\{(p_i^{(n+1,k) }) \}] =pq=0 \mod m.$   By construction of an edge-labeled complete graph $K_{n+1}$, 
	$$	B =	\{(1,\ldots,1), *\}	$$
	is a  minimum flow-up generating set for $[\mathbb{Z}/ m\mathbb{Z}]_{(K_{n+1},\alpha)}$, where ``$*$'' is $i-1$ other flow-up splines for each $1 < i \le n$ each of whose entries is contained  in the ideal  $\langle p\rangle$. Thus, $$ \rk [\mathbb{Z} / m \mathbb{Z}]_{(K_{n+1},\alpha)} = \rk  [\mathbb{Z} / m \mathbb{Z}]_{(K_n,\alpha)}.$$
	
\end{itemize}

\end{proof}

\begin{Ex}
We cannot say that rank always increases at one point or stays the same. Sometimes it also decreases. For example,  $\rk [\mathbb{Z} / m \mathbb{Z}]_{(K_{4},\alpha)}=2$ , but $ \rk [\mathbb{Z} / m \mathbb{Z}]_{(K_{5},\alpha)} =1$ as in  Figure~\ref{fig:k5pq}.
\begin{figure}[h]
	\centering
	\begin{subfigure}[b]{0.25\textwidth}
		\includegraphics[width=\textwidth]{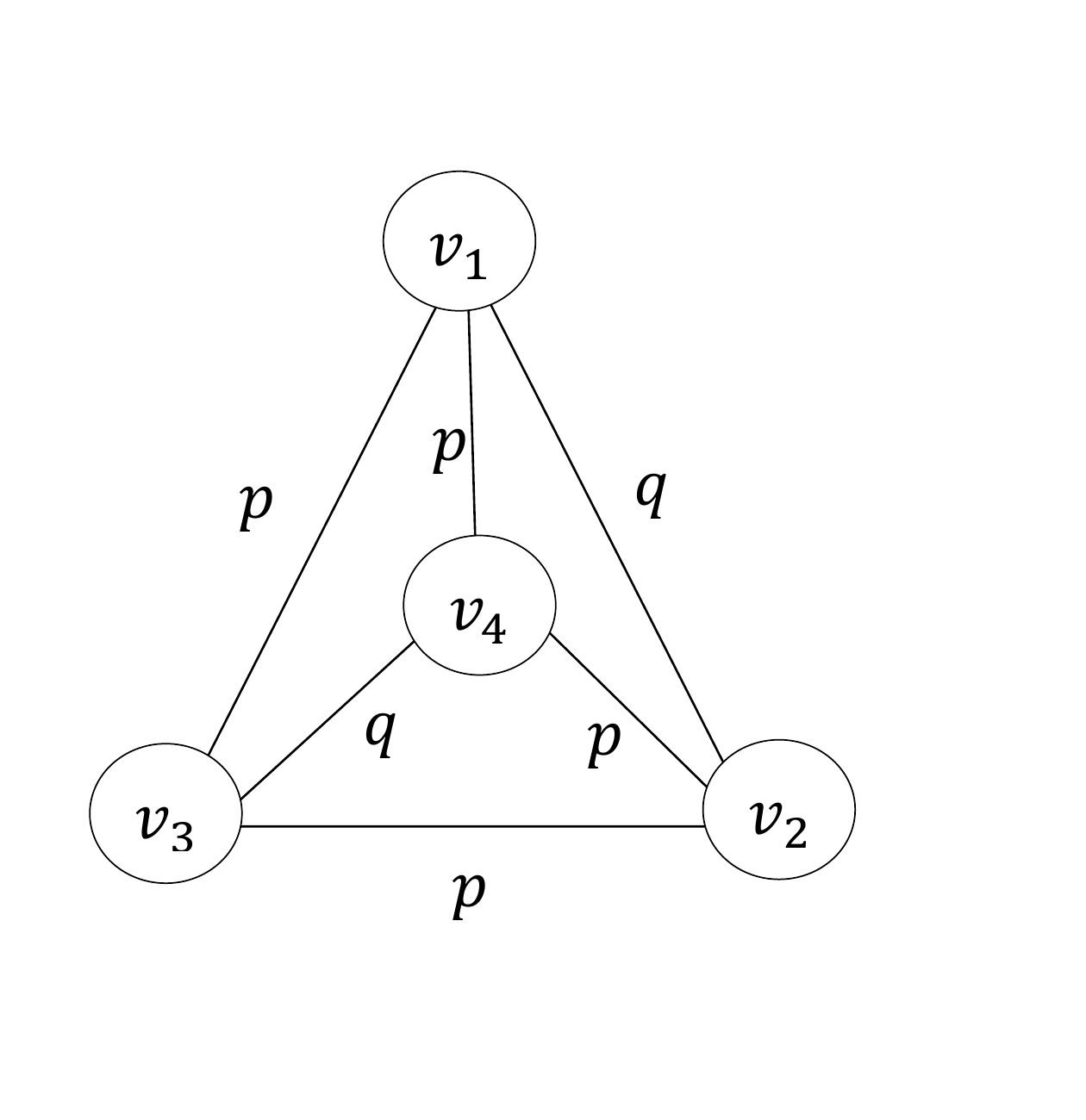}
		\label{fig:k4}
	\end{subfigure} 
	\begin{subfigure}[b]{0.35\textwidth}
		\includegraphics[width=\textwidth]{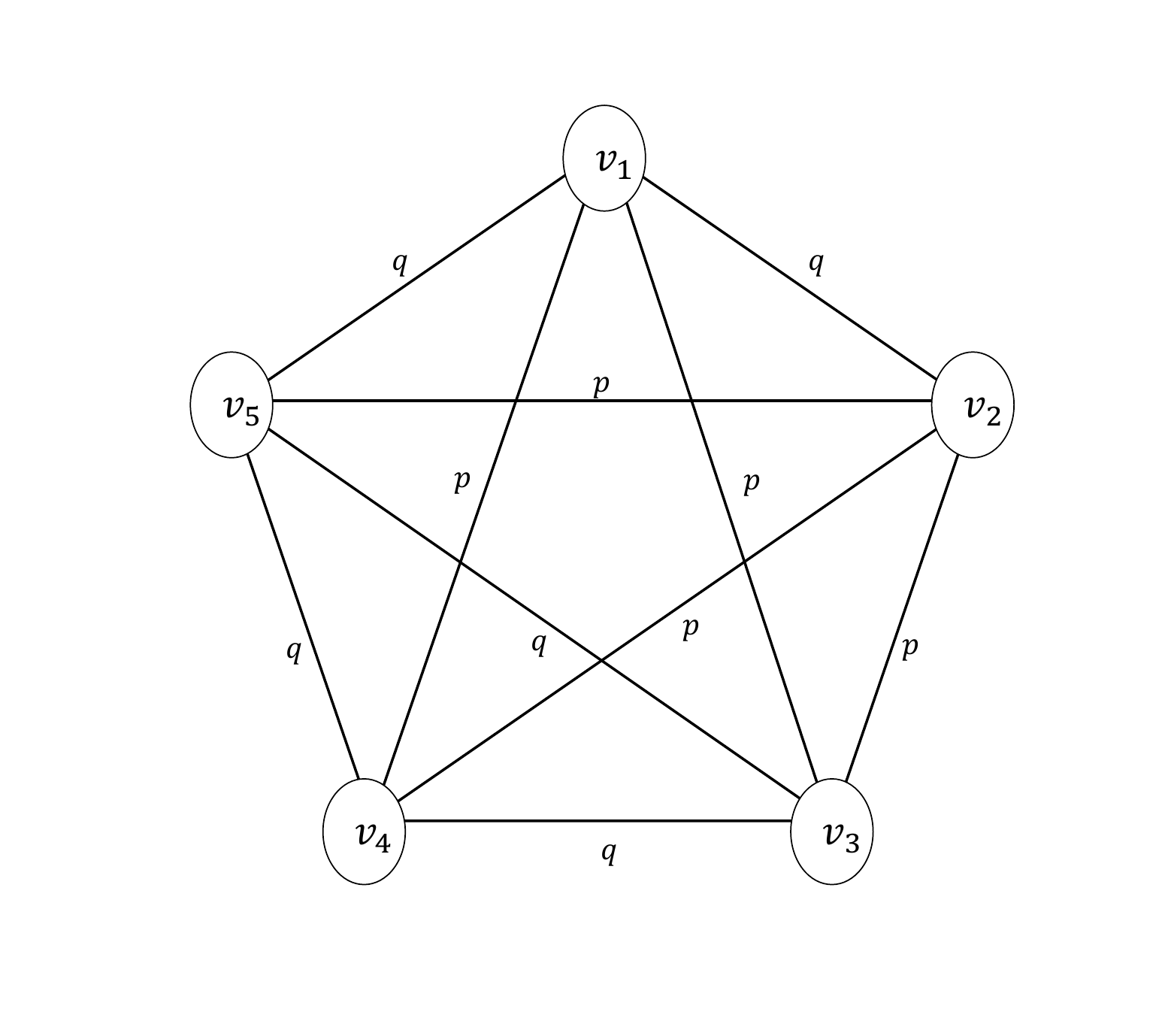}
	\end{subfigure}
	\caption{ Edge-labeled complete graphs $(K_4,\alpha)$  and $(K_5,\alpha)$ over $\mathbb{Z}/pq\mathbb{Z}$}
	\label{fig:k5pq}
\end{figure}

\end{Ex}

The following proposition is proved by using a spanning tree argument. This argument was first used by Anders et al. (see ~\cite{2018}). 

\begin{Prop} \label{rk K_n=1}
Let $m=p^\alpha q^\beta$ for some positive integers $\alpha,\beta$, where $p$ and $q$ are distinct prime numbers and $K_n$ be a complete graph. For $n \ge 4$  there exists an edge-labeled complete graph $(K_n,\alpha)$ such that  $ \rk [\mathbb{Z} / m \mathbb{Z}]_{(K_{n},\alpha)}= 1$.
\end{Prop}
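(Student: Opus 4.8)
The plan is to exhibit an explicit edge-labeling of $K_n$ for each $n\ge 4$ over $\mathbb Z/m\mathbb Z$ with $m = p^\alpha q^\beta$ so that the only splines are the constant splines, i.e. $[\mathbb Z/m\mathbb Z]_{(K_n,\alpha)}=\langle(1,\dots,1)\rangle$, forcing the rank to be $1$. The idea is to use the spanning-tree argument of Anders et al.~\cite{2018}: since $K_n$ is connected, a spline is determined by its value at one vertex together with the differences prescribed along any spanning tree; the existence of extra (non-constant) splines is obstructed by the labels on the non-tree edges, equivalently by the gcd's of cycles (trails) through those edges. To kill all non-constant splines one wants, for each pair of vertices, to be able to route a trail whose edge-labels have gcd $1$ in $\mathbb Z/m\mathbb Z$ — that is, a trail containing an edge labeled by a power of $p$ and another edge labeled by a power of $q$ (or, more directly, the least common multiple of the cycle gcd's through a given vertex being $0\bmod m$). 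Concretely, by Theorem~\ref{min}, if for a vertex $v_j$ and all earlier vertices the quantity $[\cup_{k<i}\{(p^{(j,k)})\}]\equiv 0\bmod m$, then no nonzero leading entry is available at that level, so no $i$-th flow-up class with $i>1$ exists, and by Theorem~\ref{basis} the module is generated by $(1,\dots,1)$ alone.

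First I would set up coordinates: fix the vertex order $v_1,\dots,v_n$ as in Section~\ref{completegraph}, and assign edge labels so that the edge set splits into a "$p$-part'' and a "$q$-part'', each forming a connected spanning subgraph of $K_n$ (this is where $n\ge 4$ is needed: $K_n$ has enough edges, $r_n=n(n-1)/2$, to contain two edge-disjoint connected spanning subgraphs once $n\ge 4$; for $n=4$ one can take, e.g., a spanning path labeled by $p$ and the complementary spanning path labeled by $q$, and check $r_4=6$ suffices). Label every edge in the $p$-part by $p$ and every edge in the $q$-part by $q$. Then for any $i$ with $1<i\le n$ and any $k<i$, since both the $p$-subgraph and the $q$-subgraph are connected spanning subgraphs, there is a $v_k$-trail of $v_i$ lying entirely in the $p$-part (gcd a power of $p$, in fact $p$) and another lying entirely in the $q$-part (gcd $q$); hence $[\cup_{k=1}^{i-1}\{(p^{(i,k)})\}]$ divides $\gcd(p,q)\cdot(\text{stuff})$, and in any case is a common multiple of both $p$ and $q$, so it is divisible by $pq$. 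Choosing $\alpha=\beta=1$, i.e. $m=pq$, this is $\equiv 0\bmod m$; for general $m=p^\alpha q^\beta$ one instead labels the $p$-part by $p^{\alpha}$ (or uses labels whose lcm with the $q$-labels is a multiple of $p^\alpha q^\beta$) so that every relevant lcm vanishes mod $m$ — I would state the labeling so that the lcm of the $p$-trail gcd and the $q$-trail gcd is $p^\alpha q^\beta\equiv 0$.

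Next, with this labeling, I would invoke Theorem~\ref{min}: for every $i$ with $i>1$ the hypothesis $[\cup_{k=1}^{i-1}\{(p^{(i,k)})\}]\not\equiv 0$ fails; rather it is $\equiv 0\bmod m$, which means no $i$-th flow-up class exists with a nonzero leading entry, i.e.\ $\mathcal F_i$ contains only splines whose $v_i$-entry is $0$ — and iterating this down from $i=n$ to $i=2$, combined with Theorem~\ref{basis}, shows every spline with a leading zero must be the zero spline. Therefore the only flow-up generator needed is $F^{(1)}=(1,\dots,1)$, so $\{(1,\dots,1)\}$ spans $[\mathbb Z/m\mathbb Z]_{(K_n,\alpha)}$ and the rank is $1$. (Since $m$ is not prime the module is genuinely nontrivial as a set but still cyclic, so $\rk=1$ is correct and attained.)

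The main obstacle I anticipate is the verification, for general $n$ and general $m=p^\alpha q^\beta$, that one can simultaneously (i) partition the edges of $K_n$ into two connected spanning subgraphs and (ii) choose the two label values so that \emph{every} trail-gcd lcm that could serve as a leading entry is killed mod $m$ — one must be careful that a trail is allowed to mix $p$-edges and $q$-edges, which only helps (its gcd is then $1$), so the real content is ensuring there is always at least one $v_k$-trail of $v_i$ inside the $p$-part and one inside the $q$-part; this is exactly the connected-spanning-subgraph condition, and the counting $r_n=n(n-1)/2 \ge 2(n-1)$ for $n\ge 4$ gives enough edges for two spanning trees, so the obstruction is mild. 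I would handle the base case $n=4$ by an explicit picture (as the paper does in Figure~\ref{fig:k5pq}) and then note the construction extends to all $n\ge 4$ by adding, at each step, a star $S_n$ all of whose edges are labeled $p$ except one labeled $q$, which by the second bullet of Theorem~\ref{Th:rk K_n+1} preserves the rank.
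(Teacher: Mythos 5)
Your proposal is correct and takes essentially the same approach as the paper: both construct two edge-disjoint connected spanning subgraphs (spanning trees) of $K_n$, one with all labels in $\langle p^\alpha\rangle$ and one with all labels in $\langle q^\beta\rangle$, which forces every spline to be constant modulo $p^\alpha$ and modulo $q^\beta$ and hence constant modulo $m$. The only difference is packaging --- the paper concludes by a direct two-line congruence argument while you route the same observation through the trail-gcd and flow-up machinery of Theorems~\ref{min} and~\ref{basis} --- and the paper, like you, gives the explicit Hamiltonian-path-plus-complement labeling for $n\ge 4$.
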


\begin{proof} We construct an edge-labeled complete  graph with two spanning trees $T_1$ and $T_2$ whose edge labels are contained in the ideal $\langle p^\alpha \rangle$ and $\langle q^\beta \rangle$ respectively. If we label the edges $v_iv_{i+1}$  for $i= 1 \ldots, n-1$ with $p^\alpha$ and the others with $q^\beta$,  this gives a spanning tree $T_1$  with edges $v_iv_{i+1}$ whose edge labels are contained in the ideal $\langle p^\alpha \rangle$ and $T_2$ with edges $$v_1v_3,v_3v_5,\ldots,v_{n-3}v_{n-1},v_1v_4, v_2v_4,v_4v_6,\ldots,v_{n-2}v_n$$ when $n$ is even;
$$v_1v_3,v_3v_5,\ldots,v_{n-2}v_{n},v_1v_4, v_2v_4,v_4v_6,\ldots,v_{n-3}v_{n-1}$$ otherwise whose edge labels are contained in the ideal $\langle q^\beta \rangle$. It follows that  for every spline $F=(f_{v_1},f_{v_2},\ldots,f_{v_n})$ we have $f_{v_i} \equiv f_{v_j} \mod p^\alpha$ and $f_{v_i} \equiv f_{v_j} \mod q^\beta$ for each pair of vertices $v_i$ and $v_j$ .  Thus, $f_{v_i}- f_{v_j} \equiv 0 \mod p^\alpha q^\beta$ and so $f_{v_i} \equiv f_{v_j}  \mod m$ for all vertices $v_i$ and $v_j$. Thus, we have only trivial spline. Therefore, $\rk [\mathbb{Z} / m \mathbb{Z}]_{(K_{n},\alpha)}= 1$.

\end{proof}

\begin{Nt*}
The conclusion of Proposition \ref{rk K_n=1} does not hold for $n=3$ as in that case there are not enough edges to find two disjoint spanning trees.
\end{Nt*}

\begin{Th}\label{rk K_n}
Let $m=pq$, where $p$ and $q$ are distinct prime numbers and $K_n$ be a complete graph. For each $n \ge 2$ and each $i$ with $2 \le i \le n$ there exists an edge-labeled complete graph $(K_n,\alpha)$ over $\mathbb{Z} / m \mathbb{Z}$ with  $\rk  [\mathbb{Z} / m \mathbb{Z}]_{(K_{n},\alpha)}=i$. 

\end{Th}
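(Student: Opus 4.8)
The plan is to prove the existence claim by an explicit construction for each target rank $i$ with $2 \le i \le n$, leveraging the machinery already in place: Theorem~\ref{son} gives us full-rank examples (rank $n$), while a spanning-tree/edge-partition argument in the style of Proposition~\ref{rk K_n=1} collapses ranks downward. The idea is to interpolate between these two extremes by labeling some edges so that a portion of the graph behaves like the ``collapsing'' configuration (forcing vertices to agree modulo $m$) while the remaining vertices carry genuine nontrivial flow-up freedom.

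\medskip

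\noindent\textbf{Step 1: Set up the vertex partition.} Fix $i$ with $2 \le i \le n$. I would single out the last $i-1$ vertices $v_{n-i+2},\ldots,v_n$ (or symmetrically the first few, depending on the flow-up ordering convention) as the ``free'' part, and treat $v_1,\ldots,v_{n-i+1}$ as the ``collapsed'' part. The goal is an edge-labeling $\alpha$ so that every spline $F$ is forced to be constant on $\{v_1,\ldots,v_{n-i+1}\}$ modulo $m$, contributing exactly one generator (the trivial spline $(1,\ldots,1)$ restricted appropriately), while the free part contributes exactly $i-1$ more independent flow-up classes.

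\medskip

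\noindent\textbf{Step 2: Force the collapse on the big block.} Inside the induced complete subgraph on $v_1,\ldots,v_{n-i+1}$, I would put two edge-disjoint spanning trees $T_1,T_2$ (possible as soon as this block has at least $4$ vertices, by the argument of Proposition~\ref{rk K_n=1}), labeling the edges of $T_1$ by $p$ and those of $T_2$ by $q$; remaining edges of the block get either label. Exactly as in Proposition~\ref{rk K_n=1}, every spline then satisfies $f_{v_s}\equiv f_{v_t}\bmod p$ and $\bmod\,q$, hence $\bmod\,m$, for all $s,t$ in the block. A small separate check is needed when the block has fewer than $4$ vertices — i.e.\ when $i \ge n-2$ — where one uses instead that the free part is already large and the collapsed part trivially contributes at most $1$; here I would just invoke Theorem~\ref{son}-type constructions directly for $i$ close to $n$, and handle $i=n$ by Theorem~\ref{son} itself and small $n$ by hand. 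This case-splitting is the main bookkeeping nuisance.

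\medskip

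\noindent\textbf{Step 3: Build $i-1$ independent flow-up classes on the free block.} On the edges incident to the free vertices $v_{n-i+2},\ldots,v_n$ (edges to the block and among themselves) I would assign labels that are a divisor chain of powers of a single prime, say $p$, exactly mimicking Theorem~\ref{son}(2): label the ``star'' edges added at stage $v_j$ so that the smallest-nonzero-leading-entry of $\mathcal F_j$ equals $p^{a_j}$ with $1 \le a_j$ and the $a_j$ forming a chain, and with none of these entries $\equiv 0 \bmod m$. By Theorem~\ref{exist} each such flow-up class $\bar F^{(j)}$ exists; by Theorem~\ref{min} and Theorem~\ref{basis} together with Corollary~\ref{criteria}, the trivial spline plus these $i-1$ constant flow-up classes form a minimum flow-up generating set, giving rank exactly $i$. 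The key point to verify is compatibility: the edges joining the collapsed block to the free block must carry labels (I would take them to be $p$, i.e.\ in $\langle p\rangle$) that are simultaneously consistent with the collapse argument of Step~2 (they are, since $p$ is a valid $T$-label there) and with the flow-up computation of Step~3 (they are, since they feed into the gcd/lcm of trails exactly as the $a^i$-labeled case of the Corollary following Theorem~\ref{son}).

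\medskip

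\noindent\textbf{Main obstacle.} The delicate part is checking that the two halves of the construction do not interfere: one must confirm that adding the $p$-power labels on the free side does not accidentally create a spline that is nonconstant on the collapsed block (it cannot, because the collapse only used trails internal to the block), and conversely that the collapse labels $p,q$ on the block do not enlarge the leading-entry lattice on the free side below the intended $p^{a_j}$ — this requires computing gcd's of $v_k$-trails $p^{(j,k)}$ for $k$ in the collapsed block and checking the lcm over all of them is still $p^{a_j}$ and is $\not\equiv 0\bmod m$. Handling the boundary cases $i \in \{n-1,n\}$ (too few vertices to collapse) and small $n$ separately is the other source of friction, but each such case is covered either by Theorem~\ref{son} or by a direct small example.
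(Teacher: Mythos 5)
Your overall strategy is a direct construction for each target rank $i$, which is genuinely different from the paper's proof: the paper argues by induction on $n$, starting from $K_2$ (rank $2$) and repeatedly applying Theorem~\ref{Th:rk K_n+1}, which shows that attaching a star $S_n$ with all labels in $\langle p\rangle$ raises the rank by one, while attaching a star with exactly one label in $\langle q\rangle$ and the rest in $\langle p\rangle$ keeps it fixed. The paper's mechanism for \emph{not} raising the rank is local: the new vertex acquires both a $p$-edge and a $q$-edge to earlier vertices, so the least common multiple of its trail gcd's is $pq\equiv 0\bmod m$ and its flow-up class is forced to vanish. Your mechanism is global (a two-spanning-tree collapse on a block of $n-i+1$ vertices), and this is where the gap lies.

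Concretely, the collapse requires the block to carry two edge-disjoint spanning trees, hence at least $4$ vertices, i.e.\ $i\le n-3$. For $i=n-1$ and $i=n-2$ the block has $2$ or $3$ vertices, and over $\mathbb{Z}/pq\mathbb{Z}$ the only proper nonzero labels are $\langle p\rangle$ and $\langle q\rangle$, so such a block simply cannot be collapsed (this is exactly the obstruction recorded in the Note after Proposition~\ref{rk K_n=1} for $n=3$). Your fallback --- that ``the collapsed part trivially contributes at most $1$'' and that one can ``invoke Theorem~\ref{son}-type constructions'' --- does not close this: a two-vertex block joined by a $p$-edge still admits the nontrivial flow-up class $(0,p,\dots,p)$, so it contributes \emph{two} generators and your construction yields rank $i+1$ rather than $i$; and Theorem~\ref{son} only ever produces rank $n$. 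Nor are these finitely many small exceptions to be done by hand: $i=n-1$ and $i=n-2$ occur for every $n$. The gap is repairable --- give each vertex you wish to ``kill'' one $q$-labelled edge and otherwise $p$-labelled edges to earlier vertices, so that Theorem~\ref{min} forces its leading entry to be a multiple of $pq\equiv 0$ --- but that repair is precisely the paper's inductive step, and your write-up does not supply it. (A minor further point: in $\mathbb{Z}/pq\mathbb{Z}$ all powers $p^{a_j}$ with $a_j\ge 1$ generate the same ideal $\langle p\rangle$, so there is no genuine divisor chain to choose in Step~3; taking every free-block label equal to $p$ is what you actually need, and with that reading your main construction for $i\le n-3$ does go through via Theorems~\ref{exist}, \ref{min}, \ref{basis} and Corollary~\ref{criteria}.)
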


\begin{proof}
We give a proof of  the theorem by induction on the number of vertices $n$.   We want to prove that there exists an edge-labeled graph $(K_n,\alpha)$ for each $i$ where $2\leq i\leq n$ such that the module $[\mathbb{Z} / m \mathbb{Z}]_{(K_{n},\alpha)}$ has a minimum generating set consisting of the trivial spline and $i-1$ other flow-up splines each of whose entries is $p$ (or $q$) . The base case is when $n=2$. In this case, $K_2$ is a path with one edge whose edge label is in the ideal $\langle p\rangle$ or $\langle q \rangle$. In either case, there exists an edge-labeled graph $(K_2,\alpha)$ such that $[\mathbb{Z} / m \mathbb{Z}]_{(K_{2},\alpha)}$  has a minimum generating set containing (1,1) and (0, p)(or (0,q)) so that the  rank is 2. Now we assume  that   the induction hypothesis holds for $n$ and prove that it holds for $n+1$.
We can construct  a complete graph $K_{n+1}$ by adding $S_n$ to $K_n$ in a way that the rank of the spline module is either fixed or increased exactly one by using Theorem~\ref{Th:rk K_n+1} so that $\rk  [\mathbb{Z} / m \mathbb{Z}]_{(K_{n+1},\alpha)}=i$ for $2 \le i \le n+1$. Hence the theorem is proved.

\end{proof}

\begin{Cor}
Let $m=pq$, where $p$ and $q$ are distinct prime numbers, and $K_n$ be a complete graph. For each $n \ge 4$ and each $i$ with $1 \le i \le n$ there exists an edge-labeled complete graph $(K_n,\alpha)$ over $\mathbb{Z} / m \mathbb{Z}$ with $\rk [\mathbb{Z} / m \mathbb{Z}]_{(K_{n},\alpha)}=i$. 

\end{Cor}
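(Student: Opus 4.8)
The plan is simply to assemble the two results that immediately precede the statement. The range $2 \le i \le n$ is already handled by Theorem~\ref{rk K_n}: for $m = pq$ with $p,q$ distinct primes and \emph{every} $n \ge 2$, it produces, for each such $i$, an edge-labeled complete graph $(K_n,\alpha)$ over $\mathbb{Z}/m\mathbb{Z}$ with $\rk [\mathbb{Z}/m\mathbb{Z}]_{(K_n,\alpha)} = i$. Since $n \ge 4 \ge 2$, all of these cases apply verbatim, and the only remaining value to realize is $i = 1$.

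For $i = 1$ I would invoke Proposition~\ref{rk K_n=1} with exponents $\alpha = \beta = 1$, so that $m = p^{\alpha} q^{\beta} = pq$ is exactly the modulus appearing in the statement. That proposition, valid for $n \ge 4$, constructs an edge-labeled $K_n$ whose edges split into two edge-disjoint spanning trees $T_1$ and $T_2$ with all edge labels in $\langle p \rangle$ and in $\langle q \rangle$ respectively. The two spanning-tree conditions force every spline to be constant modulo $p$ and constant modulo $q$, hence constant modulo $m = pq$, so only the trivial spline survives and $\rk [\mathbb{Z}/m\mathbb{Z}]_{(K_n,\alpha)} = 1$.

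Putting these two together, every value $i \in \{1, 2, \dots, n\}$ is realized for each $n \ge 4$, which is precisely the assertion. The hypothesis $n \ge 4$ is inherited from Proposition~\ref{rk K_n=1}, and it is genuinely needed: as recorded in the Note following that proposition, $K_3$ does not contain two edge-disjoint spanning trees, so the $i = 1$ construction cannot be carried out for $n = 3$. There is no real obstacle here beyond citing the two facts; the only point requiring (minor) care is verifying that the hypotheses of Theorem~\ref{rk K_n} and Proposition~\ref{rk K_n=1} are met for the specific modulus $m = pq$, which is immediate since $pq = p^1 q^1$.
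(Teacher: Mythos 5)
Your proposal is correct and matches the paper's own argument, which likewise obtains the range $2 \le i \le n$ from Theorem~\ref{rk K_n} and the case $i=1$ from Proposition~\ref{rk K_n=1}; you have merely spelled out the specialization $\alpha=\beta=1$ and the role of the $n\ge 4$ hypothesis, which the paper leaves implicit. No changes needed.
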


\begin{proof}
The result follows from Theorem \ref{rk K_n} and Proposition \ref{rk K_n=1}.
\end{proof}

	%------------------------------------------------

\hfil \break

\small \thanks{\indent Selma ALTINOK, Hacettepe University Department of Mathematics, 06800 Beytepe, Ankara, Turkey\\
	\indent \,\,\, \textit{E-mail address:} \textbf{sbhupal@hacettepe.edu.tr}  \\
	\\
	\indent Gökçen DILAVER, Bursa Technical University Department of Mathematics, 16330 Yıldırım, Bursa, Turkey\\
	\indent \,\,\,\textit{E-mail address:} \textbf{gokcen.dilaver@btu.edu.tr} 
	
}


\begin{thebibliography}{}
		\bibitem{2019}\label{2019} S. Altınok and S. Sarıoglan, \textit{Flow-Up Bases for Generalized Spline Modules on Arbitrary Graphs}, Journal of Algebra and Its Applications, \href{https://www.worldscientific.com/doi/10.1142/S0219498821501802}{doi: 10.1142/S0219498821501802}, \textbf{2021}.
	\bibitem{2018}\label{2018} K. Anders, A.S. Crans, B. Foster-Greenwood, B. Mellor and J. Tymoczko, \textit{Graphs admitting only constant splines}, Pacific Journal of Mathematics, vol. 304, pp. 385-400, \textbf{2020}
	\bibitem{2015}\label{2015} N. Bowden and J. Tymoczko, \textit{Splines mod m}, \href{https://arxiv.org/abs/1501.02027}{arxiv:1501.02027}, \textbf{2015}.
	\bibitem{2015bases}\label{2015bases} N. Bowden, S. Hagen, M. King and S. Reinders, \textit{Bases and structure constants of generalized splines with integer coefficients on cycles}, \href{https://arxiv.org/abs/1502.00176}{arXiv:1502.00176v1}, \textbf{2015}.
	\bibitem{2014}\label{2014} M. Handschy, J. Melnick and S. Reinders, \textit{Integer generalized splines on cycles}, \href{https://arxiv.org/abs/1409.1481}{arXiv:1409.1481}, \textbf{2014}.
	\bibitem{2013}\label{2013} S. Gilbert, S. Polster and J. Tymoczko, \textit{Generalized splines on arbitrary graphs}, Pacific Journal of Mathematics. Vol. 281, No. 2, 333-364, \textbf{2016}.
	\bibitem{2017}\label{2017} M. Philbin, L. Swift, A. Tammaro and D. Williams, \textit{Splines over integer quotient rings}, \href{https://arxiv.org/abs/1706.00105}{arXiv:1706.00105}, \textbf{2017}.
	
\end{thebibliography}
\end{document}